\numberwithin{equation}{section}
\newtheoremstyle{personal}%
{12pt}
{12pt}
{\slshape}
{}
{\bfseries}
{.}
{.5em}
{}
\theoremstyle{personal}%
\newtheorem{thm}{Theorem}[section]
\newtheorem{lem}[thm]{Lemma}
\newtheorem{prop}[thm]{Proposition}
\theoremstyle{definition}
\newcommand{\I}{\mathds{I}}
\newcommand{\V}{\mathds{V}}
\newcommand{\W}{\mathds{W}}
\newcommand{\HH}{\mathds{H}}
\newcommand{\N}{\mathds{N}}
\newcommand{\Z}{\mathds{Z}}
\newcommand{\R}{\mathds{R}}
\newcommand{\Q}{\mathds{Q}}
\newcommand{\PP}{\mathds{P}}
\newcommand{\C}{\mathds{C}}
\newcommand{\zerosection}{0\mbox{-}\mathrm{section}}
\newcommand{\crit}{\mathrm{crit}}
\newcommand{\diff}{\mathrm{d}}
\newcommand{\incl}{\mathrm{incl}}
\newcommand{\ind}{\mathrm{ind}}
\newcommand{\nul}{\mathrm{nul}}
\newcommand{\qq}{\bm{q}}
\newcommand{\pp}{\bm{p}}
\newcommand{\vv}{\bm{v}}
\newcommand{\ww}{\bm{w}}
\newcommand{\injrad}{\mathrm{injrad}}
\newcommand{\Tan}{\mathrm{T}}
\newcommand{\ev}{\mathrm{ev}}
\newcommand{\Ev}{\mathrm{Ev}}
\DeclareRobustCommand{\llongrightarrow}{\relbar\joinrel\relbar\joinrel\rightarrow}
\DeclareMathOperator{\rank}{\mathrm{rank}} 
\DeclareMathOperator*{\toup}{\longrightarrow} 
\DeclareMathOperator*{\ttoup}{\llongrightarrow}
\begin{document}

\title{A min-max characterization of Zoll Riemannian metrics}

\author{Marco Mazzucchelli}
\address{Marco Mazzucchelli\newline\indent CNRS, \'Ecole Normale Sup\'erieure de Lyon, UMPA\newline\indent  46 all\'ee d'Italie, 69364 Lyon Cedex 07, France\newline\indent and\newline\indent Mathematical Sciences Research Institute\newline\indent 17 Gauss Way, Berkeley, CA 94720, USA}
\email{marco.mazzucchelli@ens-lyon.fr}

\author{Stefan Suhr}
\address{Stefan Suhr\newline\indent Ruhr-Universit\"at Bochum, Fakult\"at f\"ur Mathematik\newline\indent
Geb\"aude NA 4/33, D-44801 Bochum, Germany}
\email{stefan.suhr@rub.de}

\date{September 23, 2018. \emph{Revised}: October 21, 2018}
\subjclass[2010]{53C22, 58E10}
\keywords{Besse metrics, Zoll metrics, closed geodesics, Lusternik-Schnirelmann theory}

\begin{abstract}
We characterize the Zoll Riemannian metrics on a given simply connected spin closed manifold as those Riemannian metrics for which two suitable min-max values in a finite dimensional loop space coincide. We also show that on odd dimensional Riemannian spheres, when certain pairs of min-max values in the loop space coincide, every point lies on a closed geodesic.
\end{abstract}
\maketitle

\section{Introduction}
\label{s:intro}

On a closed manifold of dimension at least 2, a Riemannian metric is called Besse when all of its geodesics are closed. It is  called Zoll when all its unit-speed geodesics are closed with the same minimal period, and simple Zoll when they are also without self-intersections. As usual, by closed geodesic we mean a non-constant periodic orbit of the geodesic flow. Riemannian metrics in these three classes are of great interest in Riemannian geometry, see \cite{Besse:1978pr}.

The only known closed manifolds admitting Zoll Riemannian metrics are the compact rank-one symmetric spaces, that is, $S^n$, $\R \PP^n$, $\C \PP^n$, $\HH \PP^n$, or $\mathrm{Ca}\PP^2$, whose canonical Riemannian metrics are simple Zoll. Actually a result of Bott and Samelson \cite{Bott:1954aa, Samelson:1963aa} implies that any closed manifold admitting a simple Zoll Riemannian metric has the integral cohomology ring of a compact rank-one symmetric space.
Conjecturally, on simply connected closed manifolds $M$, the notions of Besse and Zoll Riemannian metrics are equivalent. This conjecture has been recently established for $M=S^n$ with $n\geq4$ by Radeschi and Wilking \cite{Radeschi:2017dz}, and was earlier established for $M=S^2$ by Gromoll and Grove \cite{Gromoll:1981kl}, who also showed that on $S^2$ the condition of being simple Zoll is equivalent to the other two ones.

The geodesic flow of a Riemannian manifold is a classical autonomous Hamiltonian flow in its tangent bundle. This implies that the closed geodesics parametrized with constant speed are the non-trivial critical points of the energy functional on the space of loops, whereas the closed geodesics with any parametrization are the critical points of the length energy functional. A result claimed by Lusternik \cite{Ljusternik:1966tk} and proved recently by the authors \cite{Mazzucchelli:2017aa} implies that a Riemannian 2-sphere is Zoll if and only if the min-max values of the length functional over three suitable homology classes of the space of unparametrized simple loops coincide. If instead only two among these three values coincide, the Riemannian metric may not be Zoll, but the geodesic dynamics is still rather special: any point of the 2-sphere must lie on a closed geodesic. For $n$-spheres of arbitrary dimension $n\geq2$ with sectional curvature pinched inside $[1/4,1]$, analogous results were proved by Ballmann-Thorbergsson-Ziller \cite{Ballmann:1983fv}. The goal of this paper is to provide further results along this line for more general closed manifolds, and in particular for higher dimensional ones, without any assumption on the curvature.

In order to state our theorems, let us quickly recap the variational theory for the closed geodesic problem. Let $M$ be a closed orientable manifold of dimension $n\geq 2$ admitting a simple Zoll Riemannian metric. The manifold $M$ will be implicitly identified with the submanifold of constant loops in the free loop space $\Lambda M=W^{1,2}(\R/\Z,M)$. For this class of manifolds, we have the explicit cohomology computation
\begin{align}
\label{e:cohomology_loop_space}
H^*(\Lambda M,M)
\cong 
\bigoplus_{m\geq1}
H^{*-m\,i(M)-(m-1)(n-1)}(SM),
\end{align}
where $SM$ denotes the unit tangent bundle of $M$, and $i(M)$ is a suitable positive integer only depending on the integral cohomology ring of $M$, according to an argument due to Radeschi and Wilking \cite[page~942]{Radeschi:2017dz}. Throughout this paper, the singular cohomology $H^*$ and the singular homology $H_*$ will always be intended with $\Z$ coefficients unless we explicitly state otherwise. Since $M$ admits a simple Zoll Riemannian metric, Bott and Samelson's theorem \cite[Theorem~7.23]{Besse:1978pr} implies that it is simply connected and with vanishing Euler characteristic if and only if it is homeomorphic to an odd dimensional sphere $S^n$. In this case, the relative cohomology group $H^{*}(\Lambda M,M)$ has rank at most one in every degree, and for each integer $m\geq1$ we choose two generators
\begin{equation}
\label{e:classes_thm1}
\alpha_m  \in H^{(2m-1)(n-1)}(\Lambda M,M),
\qquad
\beta_m  \in H^{2m(n-1)+1}(\Lambda M,M).
\end{equation}
For each Riemannian metric $g$ on $M$, the associated energy functional is 
\[
E:\Lambda M\to[0,\infty),\qquad E(\gamma)=\int_0^1 \|\dot\gamma(t)\|^2_g\,\diff t.
\]
For each $b>0$, we consider the energy sublevel set $\Lambda M^{<b}:=\{\gamma\in\Lambda M\ |\ E(\gamma)<b\}$, and denote by $\iota_b:(\Lambda M^{<b},M)\hookrightarrow(\Lambda M,M)$ the inclusion. Given a non-trivial cohomology class $\mu\in H^d(\Lambda M,M)$, the associated min-max 
\begin{align*}
c_g(\mu)=c_g(-\mu):=\inf\{b>0\ |\ \iota_b^*\mu\neq 0\}
\end{align*}
is a critical value of $E$, and thus the energy of a closed geodesic.
One can easily verify that, if $g$ is a Zoll Riemannian metric with unit-speed geodesics of minimal period $\ell>0$, then $c_g(\alpha_m)=c_g(\beta_m)=m^2\ell^2$ for all $m\in\N$. Conversely, we will prove the following theorem.

\begin{thm}
\label{t:every_point}
Let $M$ be a manifold homeomorphic to an odd dimensional sphere $S^n$, $n\geq 3$, and $g$ a Riemannian metric on $M$.
If $c_g(\alpha_m)=c_g(\beta_m)$ for some $m\geq 1$, then for each $q\in M$ there exists a (possibly iterated) closed geodesic $\gamma\in\crit(E)$ with $\gamma(0)=q$ and $E(\gamma)=c_g(\alpha_m)$.
\end{thm}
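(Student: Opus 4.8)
The plan is to reduce the statement to the surjectivity, onto $M$, of the evaluation map $\ev\colon\Lambda M\to M$, $\ev(\gamma)=\gamma(0)$, restricted to the critical set at the common min-max level. Put $c:=c_g(\alpha_m)=c_g(\beta_m)$ and $K:=\{\gamma\in\crit(E):E(\gamma)=c\}$. The set $K$ is compact (by the Palais-Smale condition for $E$) and invariant under the $O(2)$-action reparametrizing $\R/\Z$ by rotations and reflections; hence a point $q\in M$ lies on some $\gamma\in K$ precisely when $q=\gamma(0)$ for some such $\gamma$, and the theorem is equivalent to the assertion $\ev(K)=M$. I will prove this by contradiction, assuming that some $q_0\in M$ satisfies $q_0\notin\ev(K)$, so that $K$ is contained in the open set $\Lambda'M:=\ev^{-1}(M\setminus\{q_0\})$.

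The first ingredient is the identity $\beta_m=\pm\,\alpha_m\cup\ev^*\tau$ in $H^{2m(n-1)+1}(\Lambda M,M)$, where $\tau\in H^n(M)$ is a generator and the product uses the module structure $H^*(\Lambda M,M)\otimes H^*(\Lambda M)\to H^*(\Lambda M,M)$. Since $H^*(\Lambda M,M)$ has rank at most one in each degree, $\alpha_m\cup\ev^*\tau$ is an integer multiple of $\beta_m$, and it suffices to pin that multiple down to $\pm1$, which I would do in a Morse-Bott model of $E$: for the round metric on $S^n$ the $m$-fold iterated closed geodesics form a critical manifold diffeomorphic to $SM$ on which $\ev$ restricts to the bundle projection $SM\to M$, the negative bundle has rank $(2m-1)(n-1)$ and is orientable over $\Z$, and under the Thom isomorphism $\alpha_m$ corresponds to the Thom class and $\beta_m$ to the Thom class cup the pullback of $\tau$ (here $H^n(SM)\cong\Z$ because $\chi(M)=0$). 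As the identity concerns only the topology of the pair $(\Lambda M,M)$ it transports to $M$ along a homotopy equivalence $M\simeq S^n$ compatible with $\ev$; in particular it forces $c_g(\beta_m)\geq c_g(\alpha_m)$ for every metric, so the hypothesis of the theorem is really the reverse inequality.

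The second ingredient is a Lusternik-Schnirelmann-type deformation fact: if $c=c_g(\mu)$ for a nonzero $\mu\in H^*(\Lambda M,M)$ and $\omega\in H^k(\Lambda M)$ with $k\geq1$ vanishes on some open neighborhood of $K$, then $c_g(\mu\cup\omega)>c$ (provided $\mu\cup\omega\neq0$). Assuming, as one may, that $c$ is isolated among the critical values of $E$, the second deformation theorem gives, for small $\varepsilon>0$, a deformation retraction of $\Lambda M^{<c+\varepsilon}$ onto $A:=\Lambda M^{<c-\varepsilon}\cup N$, with $N$ an open neighborhood of $K$ on which $\omega$ vanishes. Writing $B:=\Lambda M^{<c-\varepsilon}$, the triviality $\iota_{c-\varepsilon}^*\mu=0$ together with the exact sequence of the triple $(A,B,M)$ and the excision isomorphism $H^*(N,N\cap B)\cong H^*(A,B)$ shows that the restriction of $\mu$ to $(A,M)$ lies in the image of $H^*(N,N\cap B)$; cupping with $\omega$, whose restriction to $N$ is $0$, kills it, so $\iota_{c+\varepsilon}^*(\mu\cup\omega)=0$ and hence $c_g(\mu\cup\omega)\geq c+\varepsilon$.

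Finally I would apply this with $\mu=\alpha_m$ and $\omega=\ev^*\tau$: under the contradiction hypothesis, any open neighborhood $N$ of $K$ with $N\subset\Lambda'M$ has $\ev|_N$ factoring through $M\setminus\{q_0\}$, which is contractible (it is homeomorphic to $\R^n$), so $\tau$ restricts to $0$ in $H^n(M\setminus\{q_0\})$ and therefore $\ev^*\tau|_N=0$; the deformation fact then yields $c_g(\beta_m)=c_g(\alpha_m\cup\ev^*\tau)>c_g(\alpha_m)$, contradicting the hypothesis. Thus $\ev(K)=M$, which is the theorem. I expect the first ingredient — making the product formula $\beta_m=\pm\alpha_m\cup\ev^*\tau$ precise — to be the main obstacle, since it hinges on the $\Z$-orientability of the negative bundles of the iterated closed geodesics and on the compatibility of the Thom isomorphisms with the $H^*(\Lambda M)$-module structure across the bands of the splitting \eqref{e:cohomology_loop_space}; the deformation argument is routine once the minor point about accumulation of critical values is addressed.
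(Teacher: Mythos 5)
Your proposal is correct and follows essentially the same strategy as the paper's proof: the same two ingredients, namely the subordination $\beta_m=\pm\,\alpha_m\smile\ev^*\tau$ (the paper's Lemma~\ref{l:subordinated}, proved by the same Morse--Bott/Thom-class computation at the critical manifold $K^m\cong SM$ of a Zoll metric, using $\chi(S^n)=0$ and $H^1(S^n)=0$ to identify $H^n(SM)\cong H^n(M)$), and the Lusternik--Schnirelmann deformation argument showing that $\ev^*\tau$ must be nonzero on any open neighborhood of the critical set at level $c$, which is impossible if $\ev$ misses a point of $M$ since $H^n(M\setminus\{q\})=0$. The only cosmetic differences are that you re-derive the Lusternik--Schnirelmann fact (the paper cites it, and the general version you allude to indeed does not require the critical value $c$ to be isolated, so the ``minor point'' you flag is a non-issue once the standard reference is used) and that you prove the product formula on the round $S^n$ and transport it along a homotopy equivalence, whereas the paper works directly with a Zoll metric on $M$.
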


Our second result provides a min-max characterization of Zoll Riemannian metrics on simply connected spin closed manifolds. The statement requires a new finite dimensional reduction of the variational settings for the energy, which goes as follows. Let $(M,g)$ be a closed Riemannian manifold of dimension $n\geq2$, with associated energy functional $E:\Lambda M\to[0,\infty)$. We denote by $\rho=\injrad(M,g)>0$ the injectivity radius, and by $d:M\times M\to[0,\infty)$ the Riemannian distance. For each $\delta\in(0,\rho)$ and  $k\in\N$, we consider the space 
\begin{align*}
\Upsilon M = \Upsilon_{\delta,k}M:=\left\{ \qq=(q_0,...,q_{k-1})\in M\times...\times M\ 
\left|\  
  \begin{array}{@{}l@{}}
    d(q_0,q_1)=\delta, \vspace{5pt} \\ 
    \displaystyle\sum_{i\in\Z_k\setminus\{0\}}\!\!\! d(q_i,q_{i+1})^2<\rho^2
  \end{array}
\right.\right\}.
\end{align*}
For each $\qq\in\Upsilon M$, we define the quantities
\begin{align}
\label{e:sigma}
\sigma(\qq)^2&:=(k-1)\!\!\!\sum_{i\in\Z_k\setminus\{0\}}\!\!\!d(q_i,q_{i+1})^2>0,\\
\label{e:tau_0}
\tau_0(\qq)&:=0,\\
\label{e:tau_1}
\tau_1(\qq)&:=\frac{\delta}{\delta+\sigma(\qq)},\\
\label{e:tau_i}
\tau_{i}(\qq)&:=\tau_1(\qq)+ \frac{(i-1)(1-\tau_1(\qq))}{k-1},\quad\forall i=2,...,k,
\end{align}
so that $0=\tau_0<\tau_1<...<\tau_k=1$.
We consider $\Upsilon M$ as a finite dimensional submanifold of $\Lambda M$, by identifying each $\qq\in\Upsilon M$ with the unique periodic curve $\gamma_{\qq}\in\Lambda M$ such that, for each $i=0,...,k-1$, the restriction $\gamma_{\qq}|_{[\tau_i(\qq),\tau_{i+1}(\qq)]}$ is the unique shortest geodesic joining $q_i$ and $q_{i+1}$.

A feature of $\Upsilon M$ that was missing in $\Lambda M$ is the smooth evaluation map 
\begin{align}
\label{e:evaluation}
\Ev: \Upsilon M\to SM,
\qquad
\Ev(\qq)
:=
\exp_{q_0}^{-1}(q_1),
\end{align}
where we have denoted by $SM$ the unit tangent bundle of $(M,\delta^{-2}g)$. This map is injective in cohomology (Lemma~\ref{l:injectivity}). We choose two generators
\begin{align}
\label{e:omega}
&\omega\in \Ev^*(H^{2n-1}(SM))\cong\Z,\\
\label{e:alpha}
&\alpha\in H^{i(M)}(\Lambda M,\Lambda M^{<4\rho^2})\cong H^{i(M)}(\Lambda M,M)\cong\Z,
\end{align}
We will always fix a sufficiently small parameter $\delta\in(0,\rho)$ and a sufficiently large $k\in\N$ for the space $\Upsilon M=\Upsilon_{\delta,k}M$ so that, according to Lemma~\ref{l:cohomology}, $\omega\smile j^*\alpha\neq 0$ in $H^{i(M)}(\Upsilon M,\Upsilon M^{<4\rho^2})$, where $j:(\Upsilon M,\Upsilon M^{<4\rho^2})\hookrightarrow(\Lambda M,\Lambda M^{<4\rho^2})$ is the inclusion. For each $b>0$, we set $\Upsilon M^{<b}:=\Lambda M^{<b}\cap\Upsilon M$, and denote by 
$j_{b}:(\Upsilon M^{<b},\Upsilon M^{<4\rho^2})
\hookrightarrow
(\Upsilon M,\Upsilon M^{<4\rho^2})$
the inclusion. Given a non-trivial cohomology class $\mu\in H^d(\Upsilon M,\Upsilon M^{<4\rho^2})$ the associated min-max 
\begin{align}
\label{e:minmax_delta_k}
c_g(\mu)=c_g(-\mu):=\inf\{b>0\ |\ j_b^*\mu\neq 0\}
\end{align}
is a critical value of the restricted energy $E|_{\Upsilon M}$, see Section~\ref{e:loop_space}. We can now state our second main theorem.

\begin{thm}
\label{t:main}
Let $M$ be a simply connected spin closed manifold of dimension $n\geq 2$ admitting a simple Zoll Riemannian metric, and $g$ a Riemannian metric on $M$. Then $c_g(j^*\alpha)=c_g(\omega\smile j^*\alpha)=:\ell^2$ if and only if $g$ is Zoll and the unit-speed geodesics of $(M,g)$ have minimal period $\ell$.
\end{thm}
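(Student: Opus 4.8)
The plan is to establish the two implications separately, with the bulk of the work going into the "only if" direction. For the "if" direction, suppose $g$ is Zoll with unit-speed geodesics of minimal period $\ell$. Then a curve $\gamma_\qq$ associated to some $\qq\in\Upsilon M$ is a closed geodesic precisely when its constituent geodesic arcs fit together smoothly, which forces it (if non-constant) to be an iterate of a $g$-geodesic, hence to have energy in $\{m^2\ell^2 : m\in\N\}$ after suitable normalization; one checks, as for the classical $\Lambda M$ picture recalled in the introduction, that the smallest critical value at which $j^*\alpha$ becomes non-zero is $\ell^2$, and that $\omega\smile j^*\alpha$ also first survives at the same level because over the Zoll metric the space of shortest closed geodesics through a point already carries the relevant $SM$-cohomology. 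Thus $c_g(j^*\alpha)=c_g(\omega\smile j^*\alpha)=\ell^2$.

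For the "only if" direction, assume $c_g(j^*\alpha)=c_g(\omega\smile j^*\alpha)=:\ell^2$. The key structural input is that $c_g$ is a min-max over a \emph{product} of cohomology classes: by the standard Lusternik--Schnirelmann argument (subordination of classes under the cup product), if $c_g(\omega\smile j^*\alpha)$ equalled $c_g(j^*\alpha)$ then the critical level $E|_{\Upsilon M}=\ell^2$ must carry a "large" critical set — specifically, the sublevel pair at that energy must fail to absorb the class $\omega$, which by the injectivity of $\Ev$ in cohomology (the cited Lemma) means the set of closed geodesics $\gamma_\qq$ of energy $\ell^2$ must evaluate, via $\Ev$, onto a subset of $SM$ carrying the top class $H^{2n-1}(SM)$, i.e. essentially \emph{all} of $SM$. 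Concretely: for every $(q,v)\in SM$ there is a closed geodesic of energy $\ell^2$ whose associated broken-geodesic representative begins at $q$ in the direction $v$. For $\delta$ small and $k$ large the reduction $\Upsilon M$ is fine enough that such a $\gamma_\qq$ is an honest closed geodesic of $(M,g)$ (not merely a broken one) passing through $q$ with velocity proportional to $v$.

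From here the argument is: every unit tangent vector lies on a closed geodesic of the \emph{same} energy $\ell^2$, hence of the same length $\ell$ (after the normalization built into $\Upsilon M$ via $\delta^{-2}g$). In particular $g$ is Besse, and all its prime closed geodesics have the \emph{same} length $\ell$ — call this common-length property "$\ell$-Zoll up to iteration". The final step is to upgrade "Besse with all prime closed geodesics of length $\ell$" to "Zoll", i.e. to rule out that some geodesics close up only after going around an integer number of times more than others relative to the common length; equivalently, to show the geodesic flow is actually periodic with period $\ell$ on all of $SM$. Here I would invoke the hypothesis that $M$ is simply connected, spin, and admits a simple Zoll metric: by the Bott--Samelson theorem $M$ has the rational (indeed integral) cohomology of a CROSS, the common-period structure of Besse metrics on such $M$ is rigid, and a first-Betti-number / index argument (in the spirit of Radeschi--Wilking, whose cohomology computation \eqref{e:cohomology_loop_space} is quoted) forces the geodesic flow to be periodic with the prime period $\ell$, i.e. $g$ is Zoll.

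The main obstacle I anticipate is precisely this last upgrade from "all prime closed geodesics have equal length" to genuine Zoll-ness: having a closed geodesic through every point of a fixed energy is a Lusternik--Schnirelmann consequence of the min-max coincidence, but concluding that \emph{every} unit-speed geodesic (not just some geodesic loop through each point) closes up with minimal period exactly $\ell$ requires the global topological rigidity of $M$ and a careful bookkeeping of iteration multiplicities — this is where the spin and simple-Zoll hypotheses, and the precise form of \eqref{e:cohomology_loop_space}, must be used. A secondary technical point is ensuring that the broken-geodesic critical points detected by the finite-dimensional min-max are unbroken; this is handled by choosing $\delta$ small and $k$ large (as the paper already stipulates for the cup-product non-vanishing), so that any $\gamma_\qq$ with energy $\leq\ell^2$ that is a critical point of $E|_{\Upsilon M}$ is automatically a smooth closed geodesic.
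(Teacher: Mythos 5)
Your proposal tracks the paper's architecture at a coarse level, but the only-if direction contains a false intermediate claim and omits the decisive final argument. You assert that the Lusternik--Schnirelmann step yields "$g$ is Besse, and all its prime closed geodesics have the same length $\ell$." The second half is wrong: the closed geodesic of energy $\ell^2$ through $(q,v)\in SM$ could be the $m$-th iterate of a shorter prime geodesic, so prime lengths can be $\ell/m$ for various $m$. The correct, weaker, conclusion (Lemma~\ref{l:main_lemma}) is that $g$ is Besse and either $\ell$ \emph{or} $\ell-2\delta$ is a common \emph{multiple} of the prime periods. The $\ell-2\delta$ alternative arises from the zig-zag critical points of $E|_{\Upsilon M}$ described in Proposition~\ref{p:critical_points}, which cannot be eliminated by shrinking $\delta$ or enlarging $k$ --- every closed geodesic has a zig-zag companion at energy $(2\delta+\sqrt{E})^2$ --- so your remark that choosing parameters appropriately makes all detected critical points smooth is incorrect. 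The additional, and essential, output of Lemma~\ref{l:main_lemma} that you do not extract is the Morse index bound $\ind(E,K)\leq i(M)$, hence $\ind(E,K)=i(M)$ by minimality of $i(M)$; this is what powers the final step.

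The remaining passage from Besse to Zoll is described as "a first-Betti-number / index argument in the spirit of Radeschi--Wilking," but the mechanism is not identified, and without it the simply-connected and spin hypotheses never actually enter. The paper's argument is concrete: since $M$ is simply connected, Radeschi--Wilking's Theorem~D shows that $E$ is perfect for $S^1$-equivariant rational cohomology, and since $M$ is spin, their Corollary~C shows that the negative bundles of all critical manifolds are orientable; together these identify $H^{i(M)}_{S^1}(\Lambda M,M;\Q)$ with $H^0_{S^1}(K_{i(M)};\Q)$, whose rank is the number of path-components of the minimal-index critical set $K_{i(M)}$. Computing this rank with a fixed Zoll metric gives $1$ (as $K_{i(M)}\cong SM$ is connected), whereas if $g$ were Besse but not Zoll then $K\subsetneq K_{i(M)}$ (using $\ind(E,K)=i(M)$) would force rank $\geq 2$ --- a contradiction. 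Your outline gestures at Bott--Samelson and the cohomology computation~\eqref{e:cohomology_loop_space}, but neither is the operative input; the $S^1$-equivariant perfectness, the orientability of negative bundles, and the counting of components of the minimal-index locus are all indispensable and absent from your sketch.
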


In this theorem, if we drop the assumptions of $M$ being simply connected and spin, it is still true that $c_g(j^*\alpha)=c_g(\omega\smile j^*\alpha)=:\ell^2$ if $g$ is Zoll with unit-speed geodesics of minimal period $\ell$; however, if $c_g(j^*\alpha)=c_g(\omega\smile j^*\alpha)=:\ell^2$, our proof would only imply that $g$ is Besse and either $\ell$ or $\ell-2\delta$ is a common period of the unit-speed closed geodesics.

Actually, the only simply connected closed manifolds that are not spin and admit a Besse Riemannian metric have the same integral cohomology of even dimensional complex projective spaces $\C\PP^{2m}$. Therefore, Theorem~\ref{t:main} applies to $S^n$, $\C \PP^{2m+1}$, $\HH \PP^n$, and $\mathrm{Ca}\PP^2$. The theorem would also apply to all those closed manifolds admitting a simple Zoll Riemannian metric and having the integral cohomology of $S^n$, $\C \PP^{2m+1}$, $\HH \PP^n$, or $\mathrm{Ca}\PP^2$; however, as we already mentioned, there is no known example of simply connected, spin, closed manifold different from $S^n$, $\C \PP^{2m+1}$, $\HH \PP^n$, and $\mathrm{Ca}\PP^2$, and admitting a Besse Riemannian metric.

\subsection{Organization of the paper}
In Section~\ref{s:preliminaries} we provide some background on the energy functional of Besse Riemannian manifolds. In Section~\ref{s:covering} we prove Theorem~\ref{t:every_point}. In Section~\ref{e:loop_space} we study the variational theory of the energy functional in the finite dimensional loop space $\Upsilon M$. Finally, in Section~\ref{s:Zoll} we prove Theorem~\ref{t:main}.

\footnotesize
\subsection*{Acknowledgments}
Marco Mazzucchelli is grateful to Viktor Ginzburg, Basak G\"urel, and Nancy Hingston for discussions concerning the topic of this paper.
This material is based upon work supported by the National Science Foundation under Grant No.~DMS-1440140 while Marco Mazzucchelli was in residence at the Mathematical Sciences Research Institute in Berkeley, California, during the Fall 2018 semester. Stefan Suhr is supported by the SFB/TRR 191 ``Symplectic Structures in Geometry, Algebra and Dynamics'', funded by the Deutsche Forschungsgemeinschaft.

\normalsize

\section{Preliminaries}
\label{s:preliminaries}

Let $E:\Lambda M\to[0,\infty)$ be the energy functional of a closed Besse manifold $(M,g)$ of dimension $n\geq 2$. By a theorem due to Wadsley \cite{Wadsley:1975sp}, all unit-speed geodesics have a minimal common period $\ell>0$. In particular, each critical manifold 
\[K^m:=\crit(E)\cap E^{-1}(m^2\ell^2),\] 
where $m\in\N=\{1,2,3,...\}$, is diffeomorphic to the unit tangent bundle $SM$. 
As usual, we denote by $\ind(E,K^m)$ the Morse index of $E$ at any $\gamma\in K^m$, which is the number of negative eigenvalues of the symmetric operator associated to the Hessian $\diff^2E(\gamma)$. We also denote by $\nul(E,K^m)=\dim\ker\diff^2E(\gamma)-1$ the Morse nullity of $E$ at any $\gamma\in K^m$. Both $\ind(E,K^m)$ and $\nul(E,K^m)$ are independent of the choice of $\gamma$ within $K^m$.
The nullity of a closed geodesic is always bounded from above by $2n-2$; since $\dim(K^m)=2n-1\leq\nul(E,K^m)+1$, we readily infer that $\nul(E,K^m)=2n-2=\dim(K^m)-1$. Therefore, each critical manifold $K^m$ is non-degenerate, meaning that the restriction of the energy functional to the fibers of its normal bundle has non-degenerate Hessian.  This, together with Bott's iteration theory \cite{Bott:1956sp}, implies that
\begin{equation}
\label{e:iter_idx}
\begin{split}
\ind(E ,K^m) & = m\,\ind(E ,K) + (m-1)(n-1),\\
\nul(E, K^m) & =2n-2, 
\end{split}
\end{equation}
see \cite[Eq.~(13.1.1)]{Goresky:2009fq}. Actually, Wilking \cite{Wilking:2001pi} showed that the energy functional $E$ is always Morse-Bott when $(M,g)$ is Besse, meaning that the critical points are organized in critical manifolds  $J\subset \crit(E)$ such that $\nul(E,J)=\dim(J)-1$. Moreover, Radeschi and Wilking \cite[page 941]{Radeschi:2017dz} proved that the minimal index
\begin{align*}
i(M):=\min\big\{\ind(E,\gamma)\ \big|\ \gamma\in\crit(E)\cap E^{-1}(0,\infty)\big\}
\end{align*}
is independent of the choice of a Besse Riemannian metric on $M$, and indeed only depends on the integral cohomology ring of $M$.

Now, let us assume that $(M,g)$ is an orientable Zoll Riemannian manifold. This readily implies that $i(M)=\ind(E,K)$ and
\begin{align*}
 \crit(E)\cap E^{-1}(0,\infty) = \bigcup_{m\geq 1} K^m.
\end{align*}
A result of Goresky and Hingston \cite[Theorem~13.4(1)]{Goresky:2009fq} implies that, if one further assumes that $g$ is simple Zoll, the energy functional $E$ is perfect for the integral singular homology, and equivalently for the integral singular cohomology. This means that, for every integer $m\geq1$ and for all $\epsilon>0$ small enough,  the  homomorphism
\begin{align*}
H^*(\Lambda M,\Lambda M^{<m^2\ell^2})
\ttoup^{\incl^*} 
H^*(\Lambda M^{<m^2\ell^2+\epsilon},\Lambda M^{<m^2\ell^2})
\end{align*}
is surjective. Moreover, by \cite[Proposition~13.2]{Goresky:2009fq}, the negative bundle of every critical manifold $K^m$ is oriented, which implies that
\begin{align*}
H^*(\Lambda M^{<m^2\ell^2+\epsilon},\Lambda M^{<m^2\ell^2})
\cong
H^{*-\ind(E ,K^m)}(SM).
\end{align*}
This, together with a usual gradient flow argument from Morse Theory, implies that cohomology of the free loop space $\Lambda M$ relative to the constant loops $M\subset \Lambda M$ is given by~\eqref{e:cohomology_loop_space}. Actually, for any Zoll Riemannian metric $g$ on $M$, the Morse index formula in~\eqref{e:cohomology_loop_space} becomes $\ind(E,K^m)=m\,i(M)+(m-1)(n-1)$. This readily implies that, on any closed manifold $M$ admitting a simple Zoll Riemannian metric, the energy functional $E:\Lambda M\to[0,\infty)$ of any Zoll Riemannian metric $g$ on $M$ is perfect even if $g$ is not simple Zoll.

\section{A min-max condition for covering with closed geodesics}
\label{s:covering}

Let $M$ be a manifold homeomorphic to an odd dimensional sphere $S^n$, $n\geq 3$. The inclusion $M\subset\Lambda M$ of the constant loops admits the evaluation map \[\ev:\Lambda M\to M,\qquad \ev(\gamma)=\gamma(0)\] as left inverse (this map should not be confused with the velocity evaluation map defined in~\eqref{e:evaluation}). This implies that the cohomology homomorphism 
\begin{align*}
 \ev^*:H^*(M)\hookrightarrow H^*(\Lambda M)
\end{align*}
is injective. 
Ziller's computation \cite{Ziller:1977rp} of the homology of the free loop space of the compact rank-one symmetric spaces gives 
\begin{align*}
H^d(\Lambda M,M)\cong
\left\{
\begin{array}{ll}
    \Z, & \mbox{if }d=m(n-1)\mbox{ or }d=m(n-1)+n,\mbox{ for }m\in\N\cup\{0\}, \\ 
    0, &  \mbox{otherwise}.
  \end{array}
\right.
\end{align*}
In particular, we have two non-trivial generators $\alpha_m\in H^{(2m-1)(n-1)}(\Lambda M,M)$ and $\beta_m\in H^{2m(n-1)+1}(\Lambda M,M)$, as claimed in~\eqref{e:classes_thm1}.

\begin{lem}\label{l:subordinated}
For each $m\in\N$, we have \[\alpha_m\smile\ev^*\nu=\beta_m\] for a suitable generator $\nu\in H^n(M)$.
\end{lem}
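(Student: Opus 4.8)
The plan is to reduce the claim to a statement about the unit tangent bundle $SM$ via the cohomology isomorphism \eqref{e:cohomology_loop_space}, and then to identify the cup product $\alpha_m\smile\ev^*\nu$ with the corresponding product in $H^*(SM)$. First I would recall that, since $M$ is homeomorphic to an odd-dimensional sphere $S^n$, it admits a simple Zoll metric, so by the discussion in Section~\ref{s:preliminaries} the energy functional $E$ of that metric is perfect and \eqref{e:cohomology_loop_space} holds with $\ind(E,K^m)=m\,i(M)+(m-1)(n-1)=m(n-1)+(m-1)(n-1)=(2m-1)(n-1)$, using that $i(M)=n-1$ for an odd sphere (which follows from Ziller's computation recalled just above: the lowest positive-degree generator sits in degree $n-1$). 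Thus each $\alpha_m$ is (up to sign) the image under the Morse-theoretic isomorphism of the generator $1\in H^0(SM)$ sitting in the $m$-th band, and $\beta_m$ is the image of the generator of $H^n(SM)$ in the $m$-th band, since $H^*(SM)$ for $SM=S^{n-1}\times S^n$ (an odd sphere has Euler characteristic zero, so $SM$ is homotopy equivalent to $S^{n-1}\times S^n$) has generators $1$ in degree $0$ and a class $e$ in degree $n$ with $e^2=0$.

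Next I would make the cup product explicit. The key structural fact is that the filtration of $\Lambda M$ by energy sublevels is compatible with the $H^*(\Lambda M)$-module structure, and in particular $\ev^*\nu\in H^n(\Lambda M)$ pulls back from $M\subset\Lambda M$, hence has \emph{zero} min-max level; multiplying by it therefore preserves the $m$-th band of the Morse spectral sequence and, on the associated graded, acts as cup product with the restriction of $\ev^*\nu$ to a fiber $K^m\cong SM$ of the Morse--Bott manifold. Under the identification $K^m\cong SM$ the composite $K^m\hookrightarrow\Lambda M\xrightarrow{\ev}M$ is exactly the bundle projection $SM\to M$, so the restriction of $\ev^*\nu$ to $K^m$ is the pullback of the fundamental class $\nu\in H^n(M)$ to $SM$, i.e. precisely the degree-$n$ generator $e\in H^n(SM)$ described above (for a suitable choice of orientation generator $\nu$). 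Cupping $1\in H^0(SM)$ with $e\in H^n(SM)$ gives $e$, which is the generator of the $m$-th band in degree $(2m-1)(n-1)+n=2m(n-1)+1$, i.e. $\beta_m$ up to sign. Since both $\alpha_m$ and $\beta_m$ were only specified up to sign in \eqref{e:classes_thm1}, the identity $\alpha_m\smile\ev^*\nu=\beta_m$ holds after fixing $\nu$ appropriately.

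The step I expect to be the main obstacle is the compatibility of the cup product with the Morse-theoretic identification \eqref{e:cohomology_loop_space}: one must check that multiplication by a degree-$n$ class of min-max level $0$ does not shift bands and that on the associated graded it genuinely becomes fiberwise cup product on $H^*(SM)$. This is a standard but slightly delicate point about the multiplicative structure of the spectral sequence of the energy filtration; I would handle it by using that $E$ is perfect (so the spectral sequence degenerates and the associated graded \emph{is} $H^*(\Lambda M,M)$ as a module), together with the explicit homotopy model $K^m\simeq SM$ and the naturality of cup products under the inclusion $K^m\hookrightarrow\Lambda M$. A secondary bookkeeping point is pinning down $i(M)=n-1$ and the ring structure $H^*(SM)\cong H^*(S^{n-1}\times S^n)$ for $M$ an odd sphere; both are immediate from Ziller's computation quoted above and the vanishing of $\chi(M)$, respectively.
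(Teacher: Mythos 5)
Your proposal takes essentially the same route as the paper: fix a Zoll metric, identify $\alpha_m$ and $\beta_m$ via perfectness and the Thom isomorphism of the negative bundle $N_m\to K^m\cong SM$ with the degree-$0$ and degree-$n$ generators of the $m$-th Morse band, note that $\ev|_{K^m}$ is the bundle projection $SM\to M$ so $\ev^*\nu$ restricts to the Gysin generator of $H^n(SM)$, and conclude by naturality of cup products—the compatibility step you flag as the main obstacle is resolved in the paper exactly this way, via the Thom class $\alpha'\in H^{(2m-1)(n-1)}(N_m,\partial N_m)$ and two explicit commutative diagrams linking $H^*(N_m,\partial N_m)$, $H^*(\Lambda M,\Lambda M^{<m^2\ell^2})$, and $H^*(\Lambda M,M)$. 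One minor inaccuracy: $SM$ is not in general homotopy equivalent to $S^{n-1}\times S^n$ for odd $n$ (this holds only for $n=1,3,7$ where $S^n$ is parallelizable), but the Gysin sequence together with $\chi(M)=0$ and $H^1(M)=0$ does give the ring isomorphism $H^*(SM)\cong H^*(S^{n-1}\times S^n)$, which is all your argument actually uses.
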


\begin{proof}
Let $g$ be a Zoll Riemannian metric on $M$, and $E:\Lambda M\to[0,\infty)$ the associated energy functional. If $K=\crit(E)\cap E^{-1}(\ell^2)$ is the critical manifold of the prime closed geodesics, the other critical manifolds are $M=E^{-1}(0)$ and
\begin{align*}
 K^m=\{\gamma^m\ |\ \gamma\in K\},\qquad
 \forall m\in\N.
\end{align*}
Here, as usual, we have denoted by $\gamma^m\in\Lambda M$ the $m$-th iterate of $\gamma$, which is defined by $\gamma^m(t)=\gamma(mt)$. The associated critical values are $m^2\ell^2=E(K^m)$. Let $G$ be any complete Riemannian metric on $\Lambda M$ (for instance the usual $W^{1,2}$-one induced by $g$). We denote by $\pi:N_m\to K^m$ the negative bundle of $K^m$, which is an orientable vector bundle of rank $\ind(E,K^m)$ whose fibers $\pi^{-1}(\gamma)$ are the negative eigenspaces of the  self-adjoint Fredholm operator $H_\gamma$ on $\Tan_\gamma\Lambda M$ defined by
$G(H_\gamma \cdot,\cdot)=\diff^2E(\gamma)$.
By means of the exponential map of $(\Lambda M,G)$, we can see the total space $N_m$ as a submanifold of $\Lambda M$ containing the critical manifold $K^m$ in its interior. Since $E$ is a Morse-Bott function, for all $\epsilon>0$ small enough the inclusion induces a cohomology isomorphism
\begin{align}
\label{e:local_homology}
H^*(\Lambda M^{<m^2\ell^2+\epsilon} , \Lambda M^{<m^2\ell^2})
\ttoup^{\incl_*}_{\cong}
H^*(N_m,\partial N_m).
\end{align}
The critical manifold $K^m$ is homeomorphic to $SM$ via the map $\gamma\mapsto\dot\gamma(0)/\|\dot\gamma(0)\|_g$. Since both the Euler characteristic $\chi(M)=\chi(S^n)$ and the cohomology group $H^1(M)\cong H^1(S^n)$ vanish, the Gysin sequence of $SM$ implies that $\pi^*:H^n(M)\rightarrow H^n(SM)$ is an isomorphism. Therefore, the evaluation map induces a cohomology isomorphism
\begin{align*}
\ev|_{K^m}^*:H^n(M)\toup^{\cong} H^n(K^m).
\end{align*}
Since the inclusion $K^m\subset N_m$ is a homotopy equivalence, we also have a cohomology isomorphism
\begin{align*}
\ev|_{N_m}^*:H^n(M)\toup^{\cong} H^n(N_m),
\end{align*}
which fits into the commutative diagram 
\begin{equation*}
\begin{split}
  \xymatrix{
      H^n(\Lambda M) \ar[rr]^{\incl^*} & & H^n(N_m)   \\ \\
 \Big. H^n(M)\ar[uurr]_{\ev|_{N_m}^*}^{\cong} \ar@{^{(}->}[uu]^{\ev^*}   & &
 }   
\end{split}
\end{equation*}
We set $\I:=\{(2m-1)(n-1),2m(n-1)+1\}$.
The index formulas~\eqref{e:iter_idx}, together with $i(S^n)=n-1$, imply that
\begin{align*}
H^d(N_m,\partial N_m)\cong H^{d-\ind(E,K^m)}(K^m)\cong H^{d-(2m-1)(n-1)}(SS^n)\cong\Z,
\qquad \forall d\in\I.
\end{align*}
This, together with the fact that $E$ is a perfect functional and that the relative cohomology groups $H^*(\Lambda M,M)$ have rank at most rank 1 in each degree, implies that the inclusion  induces the cohomology isomorphism
\begin{align}
\label{e:incl_*}
\kappa_1^*: H^d(\Lambda M,\Lambda M^{<m^2\ell^2})
\ttoup^{\incl^*}_{\cong}
H^d(N_m,\partial N_m),
\qquad
\forall d\in\I.
\end{align}
Let $\nu$ be a generator of $H^n(M)$. 
We set
\begin{align*}
\mu:=\ev^*\nu\in H^n(\Lambda M),
\qquad
\mu':=\ev|_{N_m}^*\nu\in H^n(N_m).
\end{align*}
Let  $\alpha'\in H^{(2m-1)(n-1)}(N_m,\partial N_m)$ be the Thom class of the orientable vector bundle $N_m\to K^m$ corresponding to an arbitrary orientation. The Thom isomorphism implies that $\alpha'\smile\mu'$ is a generator of $H^{2m(n-1)+1}(N_m,\partial N_m)$.  Once again, since $E$ is a perfect functional, we have an isomorphism
\begin{align*}
\kappa_2^*:H^d(\Lambda M,\Lambda M^{<m^2\ell^2})
\ttoup^{\incl^*}_{\cong}
H^d(\Lambda M,M),
\qquad
\forall d\in\I,
\end{align*}
and we infer
\begin{align*}
(\kappa_2^*)^{-1}\beta_m
&=
(-1)^h
(\kappa_1^*)^{-1}(\alpha'\smile\ev|_{N_m}^*\nu)\\
&=
(-1)^h
(\kappa_1^*)^{-1}\alpha' \smile \ev^*\nu\\
&=
(-1)^h
(\kappa_2^*)^{-1}\alpha_m \smile \ev^*\nu\\
&=
(-1)^h
(\kappa_2^*)^{-1}(\alpha_m \smile \ev^*\nu).
\end{align*}
for some $h\in\{0,1\}$. Up to replacing $\nu$ with $-\nu$, we can assume that $h=0$.
\end{proof}

\begin{proof}[Proof of Theorem~\ref{t:every_point}]
Let us assume by contradiction that $\ell^2:=c_g(\alpha_m)=c_g(\beta_m)$ for some $m\in\N$, but that for some $q\in M$ there is no $\gamma\in\crit(E)$ of energy $E(\gamma)=\ell^2$ with $\gamma(0)=q$. Under this latter assumption, the open subset
$
U:=\Lambda M\setminus \ev^{-1}(q)
$
is a neighborhood of the critical set $\crit(E)\cap E^{-1}(\ell^2)$. By Lemma~\ref{l:subordinated}, $\beta_m=\alpha_m\smile\ev^*\nu$ for some generator $\nu$ of $H^n(M)$, and therefore the classical Lusternik-Schnirelmann theorem (see, e.g., \cite[Theorem~1.1]{Viterbo:1997pi} for a modern account) implies that $(\ev^*\nu)|_{U}\neq0$ in $H^n(U)$. 
Now, consider the commutative diagram
\begin{equation*}
  \xymatrix{
  H^n(M) \ar[rr]^{\ev^*}\ar[dd]_{\incl^*}  && H^n(\Lambda M) \ar[dd]_{\incl^*}\\\\
 H^n(M\setminus\{q\}) \ar[rr]^{\ev|_U^*}  & & H^n(U)
 }   
\end{equation*}
Since $M$ has dimension $n$, the punctured manifold $M\setminus\{q\}$ has trivial cohomology group $H^n(M\setminus\{q\})$. This, together with the above commutative diagram, implies that $(\ev^*\nu)|_{U}=\ev^*(\nu|_{M\setminus\{q\}})=0$, contradicting  Lusternik-Schnirelmann theorem.
\end{proof}

\section{The finite dimensional loop space $\Upsilon M$}
\label{e:loop_space}

Let $(M,g)$ be a closed Riemannian manifold of dimension $n\geq2$ with associated Riemannian distance $d:M\times M\to[0,\infty)$, injectivity radius $\rho=\injrad(M,g)>0$, and energy functional $E:\Lambda M\to[0,\infty)$. 
For each $\delta\in(0,\rho)$ and $k\in\N$, we consider the space $\Upsilon M=\Upsilon_{\delta,k}M$, which we identify with a subspace of $\Lambda M$ as we explained in the introduction. 
The restriction of the energy functional $E_{\delta,k}:=E|_{\Upsilon M}$ can be expressed as
\begin{align*}
E_{\delta,k}(\qq)
&=
\int_0^1 \|\dot\gamma_{\qq}(t)\|^2_g\,\diff t
=
\sum_{i=0}^{k-1} \frac{d(q_i,q_{i+1})^2}{\tau_{i+1}(\qq)-\tau_i(\qq)}\\
&=
\frac{\delta^2}{\tau_1(\qq)} 
+ 
\frac{k-1}{1-\tau_1(\qq)}
\sum_{i\in\Z_k\setminus\{0\}} d(q_i,q_{i+1})^2\\
&=
\left(\delta+\sqrt{(k-1)\sum_{i\in\Z_k\setminus\{0\}} d(q_i,q_{i+1})^2}\right)^2
.
\end{align*}
Here, the times $0=\tau_0(\qq)<...<\tau_{k}(\qq)=1$ are those defined in Equations \eqref{e:tau_0}, \eqref{e:tau_1} and \eqref{e:tau_i}.
For each $i\in\Z_k$, we define $v_i^{\pm}(\qq)\in\Tan_{q_i}M$ by
\begin{align*}
v_i^{\pm}(\qq):=\dot\gamma_{\qq}(\tau_i(\qq)^{\pm}). 
\end{align*}
The choice of $\tau_1$ that we made in~\eqref{e:tau_1} is such that, for all $\qq\in\crit(E_{\delta,k})$, the corresponding curve $\gamma_{\qq}$ has constant speed (even though $\dot\gamma_{\qq}$ may not be smooth at times $\tau_0(\qq)=0$ and $\tau_1(\qq)$). More precisely, we have the following statement.

\begin{prop}
\label{p:critical_points}
The critical points of $E_{\delta,k}$ are precisely those $\qq\in\Upsilon M$ such that $v_{0}^{-}(\qq)\in\{v_{0}^+(\qq),-v_{0}^+(\qq)\}$, $v_{1}^{+}(\qq)\in\{v_{1}^-(\qq),-v_{1}^-(\qq)\}$, and $v_i^-(\qq)=v_i^+(\qq)$ for all $i\in\Z_k\setminus\{0,1\}$. 
\end{prop}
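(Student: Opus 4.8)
The plan is to compute the differential $\diff E_{\delta,k}(\qq)$ directly from the closed-form expression for $E_{\delta,k}$ and identify when it vanishes. First I would note that $\Upsilon M$ is a manifold whose tangent space at $\qq$ is the set of tuples $(\xi_0,\dots,\xi_{k-1})$ with $\xi_i\in\Tan_{q_i}M$ subject to the linearization of the single constraint $d(q_0,q_1)=\delta$ (the inequality $\sum_{i\neq 0}d(q_i,q_{i+1})^2<\rho^2$ being open imposes nothing on tangent vectors). So a critical point is a constrained critical point: $\diff E_{\delta,k}(\qq)$ must annihilate every variation tangent to the level set $\{d(q_0,q_1)=\delta\}$. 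The cleanest route is the well-known first-variation formula for broken geodesics. Writing $L_i(\qq):=d(q_i,q_{i+1})$ and recalling that $\gamma_{\qq}|_{[\tau_i,\tau_{i+1}]}$ is the minimizing geodesic from $q_i$ to $q_{i+1}$, the gradient of $q\mapsto d(q,q')^2$ at $q$ (for $q$ within the injectivity radius of $q'$) is $-2\exp_q^{-1}(q')$; in terms of the velocities $v_i^{\pm}(\qq)$ this says that the partial derivative of $E_{\delta,k}$ with respect to $q_i$, holding the $\tau_j$ fixed, is a positive multiple of $v_i^-(\qq)-v_i^+(\qq)$ (the jump of the velocity at the break point $\tau_i$), by the standard computation in Milnor's \emph{Morse Theory}. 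The subtlety here is that the break times $\tau_j(\qq)$ themselves depend on $\qq$ through $\sigma(\qq)$; however, the content of the closed-form formula $E_{\delta,k}(\qq)=(\delta+\sigma(\qq))^2$ is precisely that, \emph{after} this optimal choice of $\tau_1$, the $\tau$-dependence contributes nothing extra to the differential --- this is the envelope/first-order-optimality phenomenon, and it is the reason $\tau_1$ was chosen as in~\eqref{e:tau_1}. So I would either invoke that formula and differentiate it, or, equivalently, observe that $\partial E_{\delta,k}/\partial\tau_1=0$ at the chosen value of $\tau_1$, so that the total derivative equals the partial derivative with $\tau$'s frozen.

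Carrying this out: differentiating $E_{\delta,k}(\qq)=\big(\delta+\sqrt{(k-1)\sum_{i\in\Z_k\setminus\{0\}}L_i(\qq)^2}\big)^2$ with the frozen-$\tau$ interpretation, one gets $\diff E_{\delta,k}(\qq)[\xi]$ as a sum of boundary terms at the break points. At each index $i\in\Z_k\setminus\{0,1\}$ the geodesic segments on both sides of $\tau_i$ may vary freely (no constraint touches $q_i$), so vanishing of the differential forces $v_i^-(\qq)=v_i^+(\qq)$, i.e. no corner: the two segments fit together into a single geodesic. At $i=0$ and $i=1$ the endpoints $q_0,q_1$ are tied by $d(q_0,q_1)=\delta$. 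Using a Lagrange multiplier $\lambda$ for this constraint, and writing $u(\qq):=\exp_{q_0}^{-1}(q_1)/|\exp_{q_0}^{-1}(q_1)| = v_0^+(\qq)/\|v_0^+(\qq)\|$ (the unit velocity at $q_0$ of the segment $\gamma_{\qq}|_{[0,\tau_1]}$), the constraint gradient in the $q_0$-slot is proportional to $-u(\qq)$ and in the $q_1$-slot is proportional to the corresponding unit velocity at $q_1$ arriving from $q_0$, call it $w(\qq)$, which equals $v_1^-(\qq)/\|v_1^-(\qq)\|$. The critical point equations then read: at $q_0$, the velocity jump $v_0^-(\qq)-v_0^+(\qq)$ is parallel to $u(\qq)$, and at $q_1$, $v_1^+(\qq)-v_1^-(\qq)$ is parallel to $w(\qq)$. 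Since $v_0^{\pm}(\qq)$ all have the same norm (constant speed of $\gamma_{\qq}$, which the closed-form energy guarantees at critical points) and $v_0^+(\qq)$ is itself parallel to $u(\qq)$, "$v_0^- - v_0^+$ parallel to $v_0^+$'' with $\|v_0^-\|=\|v_0^+\|$ forces $v_0^-(\qq)=\pm v_0^+(\qq)$; likewise $v_1^+(\qq)=\pm v_1^-(\qq)$. Conversely, if these three conditions (the two sign conditions at $0,1$ and the matching at all other $i$) hold, one checks the differential annihilates all admissible variations, so $\qq$ is critical.

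The main obstacle, and the step I would spend the most care on, is the rigorous handling of the $\qq$-dependence of the break times $\tau_j(\qq)$ when computing $\diff E_{\delta,k}$. Naively one must differentiate $\sum_i L_i^2/(\tau_{i+1}-\tau_i)$ and collect both the "endpoint-variation'' terms (giving the velocity jumps) and the "reparametrization'' terms (from $\diff\tau_j$); the point is that the latter cancel because of how $\tau_1$ was defined, equivalently because $E_{\delta,k}$ only sees the combination $\delta+\sigma(\qq)$. I would make this precise either by the chain-rule computation showing $\sum_j (\partial E_{\delta,k}/\partial\tau_j)\,\diff\tau_j = (\partial E_{\delta,k}/\partial\tau_1)\,\diff\tau_1 = 0$ at the admissible $\tau$, or, more slickly, by differentiating the already-simplified expression $(\delta+\sigma(\qq))^2$ and then re-expressing $\diff\sigma(\qq)$ in terms of the segment velocities via the distance-gradient identity $\nabla_{q} \tfrac12 d(q,q')^2 = -\exp_q^{-1}(q')$. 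A secondary, minor point is verifying the constant-speed claim for critical points (that $\|v_0^{\pm}(\qq)\| = \|v_i^{\pm}(\qq)\|$ for all $i$) so that the parallelism conditions upgrade to the stated $\{v,-v\}$ dichotomy; this follows from the energy formula because the $\tau$-partition it encodes is exactly the one making all $k$ segments traversed at equal speed. Everything else is the standard broken-geodesic first-variation bookkeeping.
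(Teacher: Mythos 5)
Your strategy matches the paper's: compute the first variation of $E_{\delta,k}$ via the broken-geodesic formula, freeze the break times by the envelope argument (the paper achieves this by introducing $F(\qq,\tau)$, noting $\tau_1(\qq)$ is the unique minimizer in $\tau$, and differentiating $F_{\tau_1}\circ\iota$), and read off conditions at each vertex. The paper parametrizes the constraint surface $\{d(q_0,q_1)=\delta\}$ directly by $SM\times M^{\times(k-2)}$ where you invoke a Lagrange multiplier, but this is the same bookkeeping.

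The gap is in the converse direction, where you write that ``one checks the differential annihilates all admissible variations.'' A single Lagrange multiplier couples the equations at $q_0$ and $q_1$: with $\nabla_{q_0}d = -v_0^+/\|v_0^+\|_g$ and $\nabla_{q_1}d = v_1^-/\|v_1^-\|_g$, criticality gives $v_0^--v_0^+ = -\tfrac{\lambda}{2}v_0^+/\|v_0^+\|_g$ and $v_1^--v_1^+ = \tfrac{\lambda}{2}v_1^-/\|v_1^-\|_g$ with the \emph{same} $\lambda$, and since $\|v_0^+\|_g=\|v_1^-\|_g$ this forces $c_0=c_1$ where $v_0^-=c_0v_0^+$, $v_1^+=c_1v_1^-$. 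In the paper's setting this is exactly the extra condition $g(v_1^+,v_1^-)=g(v_0^-,v_0^+)$ coming from the geodesic-vector-field direction in $\Tan_{(q_0,v_0)}SM$; the parallelism conditions alone are, a priori, strictly weaker. Concretely, for a hypothetical $\qq$ with $v_0^-=v_0^+$ but $v_1^+=-v_1^-$, all three of your conditions hold yet, on the one-parameter family of admissible variations with $g(v_0^+,\xi_0)/\|v_0^+\|_g = g(v_1^-,\xi_1)/\|v_1^-\|_g =: s$ and all other $\xi_i=0$, the differential equals $2s\|v_0^+\|_g(c_0-c_1)=4s\|v_0^+\|_g\neq 0$. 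The missing step, which the paper supplies, is that the stated conditions already \emph{imply} $c_0=c_1$: otherwise $\gamma_{\qq}|_{[\tau_1,1+\tau_1]}$ would be a smooth geodesic returning to $q_1$ with reversed velocity, which is impossible because by uniqueness of the geodesic flow such a geodesic must coincide with its own time-reversal and hence have vanishing speed at the midpoint. You should add this (or an equivalent) argument. A smaller error: the $\tau$-partition does \emph{not} make all $k$ segments of $\gamma_{\qq}$ equal-speed for generic $\qq\in\Upsilon M$ --- segments $i\geq 1$ have equal time increments $(1-\tau_1)/(k-1)$ but generally unequal lengths $d(q_i,q_{i+1})$. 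Constant speed at a critical point is a \emph{consequence} of the conditions $v_i^-=v_i^+$ for $i\notin\{0,1\}$ together with the identity $\|v_0^+\|_g=\delta/\tau_1=\delta+\sigma=\tau_1\|v_0^+\|_g+(1-\tau_1)\|v_1^+\|_g$; this is the paper's final computation, and your write-up should derive it rather than attribute it to the design of the $\tau_i$.
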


\begin{proof}
Consider the functional
\begin{gather*}
F:\underbrace{M\times...\times M}_{\times k}\times(0,1)\to[0,\infty),
\\
F(\qq,\tau)=
\frac{1}{\tau} d(q_0,q_1)^2
+ 
\frac{k-1}{1-\tau}
\sum_{i\in\Z_k\setminus\{0\}} d(q_i,q_{i+1})^2,
\end{gather*}
which is smooth on the subset $U\subset M^{\times k}\times (0,1)$ of all those points $(\qq,\tau)$ such that $d(q_i,q_{i+1})<\rho$ for all $i\in\Z_k$. Notice that, for all $\qq\in\Upsilon M$, we have $E(\gamma_{\qq})=F(\qq,\tau_1(\qq))$, and one can easily verify that $\tau_1(\qq)$ is the unique critical point and the global minimizer of the function $\tau\mapsto F(\qq,\tau)$. In the following, for each $\tau\in(0,1)$ we set
\begin{align*}
 F_\tau:=F(\cdot,\tau).
\end{align*}

We denote by $SM$ the unit tangent bundle of $(M,\delta^{-2}g)$, that is,
\begin{align*}
SM=\big\{ (q,v)\in\Tan M\ \big|\ \|v\|_g=\delta \big\}.
\end{align*}
The space $\Upsilon M$ is diffeomorphic to the space $\Upsilon' M$ of those 
\begin{align*}
\qq'=(q_0,v_0,q_2,...,q_{k-2},q_{k-1})\in SM\times\underbrace{M\times...\times M}_{\times k-2}, 
\end{align*}
such that, if we set $q_1:=\exp_{q_{0}}(v_0)$,  we have 
\begin{align*}
\sum_{i\in\Z_k\setminus\{0\}} \!\!\! d(q_i,q_{i+1})^2<\rho^2.
\end{align*}
The explicit diffeomorphism is 
\begin{align*}
\iota:\Upsilon' M\toup^{\cong}\Upsilon M,
\qquad
\iota(q_0,v_0,q_2,...,q_{k-1})=\qq=(q_0,q_1,q_2,...,q_{k-1}),
\end{align*}
and we have 
\[
\frac{v_0}{\|v_0\|_g}=\frac{\dot\gamma_{\qq}(0^+)}{\|\dot\gamma_{\qq}(0^+)\|_g}.
\] 
We consider the submersion $Q:SM\to M$, $Q(q,v)=\exp_q(v)$. The differential of $\iota$ is given by
\begin{align*}
\diff\iota(\qq')(z,\ww)=(\diff\pi(q_{0},v_{0})z,\diff Q(q_{0},v_{0})z,\ww),
\end{align*}
where $\ww=(w_2,...,w_{k-2})\in\Tan_{q_2}M\times...\times\Tan_{q_{k-1}}M$, and $z\in\Tan_{(q_0,v_0)}SM$. We set
\begin{align*}
\qq=(q_0,q_1,q_2,...,q_{k-1}):=\iota(\qq'),
\qquad
\tau_i:=\tau_i(\qq),
\qquad
v_i^{\pm}:=\dot\gamma_{\qq}(\tau_i^\pm),
\end{align*}
so that
\begin{equation}
\label{e:diff_E_iota}
\begin{split}
 \diff(F_{\tau_1}\circ\iota)(\qq')(\ww,z)
 =
 &
 \sum_{i\in\Z_k\setminus\{0,1\}} 2 g( v_i^- - v_i^+, w_i)\\
 & + 2 g( v_1^- - v_1^+, \diff Q(q_{0},v_{0})z)\\
 & + 2 g( v_{0}^- - v_{0}^+, \diff \pi(q_{0},v_{0})z). 
\end{split}
\end{equation}
By Equation~\eqref{e:diff_E_iota}, $\diff(F_{\tau_1}\circ\iota)(\qq')(0,\ww)=0$ for all $\ww$ if and only if $v_i^- = v_i^+$ for all $i\in\Z_k\setminus\{0,1\}$.

Notice that 
\[\diff Q(q_{0},v_{0})\big(\ker\diff\pi(q_{0},v_{0})\big)=\mathrm{span}\{v_{1}^-\}^{\bot}.\]
Therefore, by Equation~\eqref{e:diff_E_iota}, $\diff(F_{\tau_1}\circ\iota)(\qq')(z,0)=0$ for all $z\in\ker\diff\pi(q_{0},v_{0})$ if and only if  $v_1^- - v_1^+ \bot \mathrm{span}\{v_{1}^-\}^{\bot}$, that is, $v_{1}^{+}\in\mathrm{span}\{v_{1}^-\}$.

Now, fix an arbitrary tangent vector 
\[v\in\mathrm{span}\{v_{0}\}^{\bot}=\mathrm{span}\{v_{0}^+\}^{\bot},\] 
and choose any smooth curve $\zeta:(-\epsilon,\epsilon)\to M$ such that $\zeta(0)=q_{0}$, $\dot\zeta(0)=v$, and $d(\zeta(t),q_1)=\delta$ for all $t\in(-\epsilon,\epsilon)$. We set $\xi(t):=\exp_{\zeta(t)}^{-1}(q_1)$, and \[z:=\tfrac{\diff}{\diff t}|_{t=0}(\zeta(t),\xi(t)).\] Notice that
\begin{align*}
\diff\pi(q_{0},v_{0})z=v,\qquad \diff Q(q_{0},v_{0})z=0.
\end{align*}
Therefore, by Equation~\eqref{e:diff_E_iota}, $\diff(F_{\tau_1}\circ\iota)(\qq')(z,0)=0$ for all $z$ of this form if and only if $v_{0}^- - v_{0}^+ \bot \mathrm{span}\{v_{0}^+\}^{\bot}$, that is, $v_{0}^{-}\in\mathrm{span}\{v_{0}^+\}$.

It remains one last case in order to cover all the possible choices of tangent vectors $z\in\Tan_{(q_{0},v_{0})}SM$, namely when $z$ is the value of the geodesic vector field at $(q_{0},v_{0})$. In this case, 
\begin{align*}
\diff\pi(q_{0},v_{0})z=v_{0}=\frac{\delta}{\|v_0^-\|_g}v_0^+,\qquad \diff Q(q_{0},v_{0})z=\frac{\delta}{\|v_1^-\|_g} v_1^-=\frac{\delta}{\|v_0^-\|_g} v_1^-.
\end{align*}
Therefore, by Equation~\eqref{e:diff_E_iota}, $\diff(F_{\tau_1}\circ\iota)(\qq')(z,0)=0$ if and only if
\begin{align*}
g( v_1^- - v_1^+, v_1^-) - g( v_{0}^- - v_{0}^+, v_{0}^+)=0.
\end{align*}
Since $\|v_1^-\|_g=\|v_{0}^+\|_g$, this latter equation is verified if and only if
\begin{align}
\label{e_redundant}
g( v_1^+, v_1^-) = g( v_{0}^-, v_{0}^+).
\end{align}
Notice however that condition~\eqref{e_redundant} for the critical points of $E_{\delta,k}$ is redundant: indeed, any point $\qq\in\Upsilon M$ such that $v_{0}^{-}\in\mathrm{span}\{v_{0}^+\}$, $v_{1}^{+}\in\mathrm{span}\{v_{1}^-\}$, $v_i^-=v_i^+$ for all $i\in\Z_k\setminus\{0,1\}$, and $g( v_1^+, v_1^-) \neq g( v_{0}^-, v_{0}^+)$ would define a geodesic cusp $\gamma_{\qq}$ and thus violate the uniqueness of the solution to the geodesic equation.

Summing up, we have proved that $\qq\in\crit(E_{\delta,k})$ if and only if $v_{0}^{-}\in\mathrm{span}\{v_{0}^+\}$, $v_{1}^{+}\in\mathrm{span}\{v_{1}^-\}$, $v_i^-=v_i^+$ for all $i\in\Z_k\setminus\{0,1\}$. In this case, we have
\begin{align*}
\sigma=\sigma(\qq)=\int_{\tau_1}^1 \|\dot\gamma_{\qq}\|_g\,\diff t,
\end{align*}
and therefore
\begin{align*}
\|v_0^+\|_g=\frac{\delta}{\tau_1}=\delta+\sigma=\int_{0}^1 \|\dot\gamma_{\qq}\|_g\,\diff t = \tau_1\|v_0^+\|_g+ (1-\tau_1)\|v_1^+\|_g,
\end{align*}
which implies that $\|v_0^+\|_g=\|v_1^+\|_g$.
\end{proof}

\begin{figure}
\begin{center}
\begin{small}
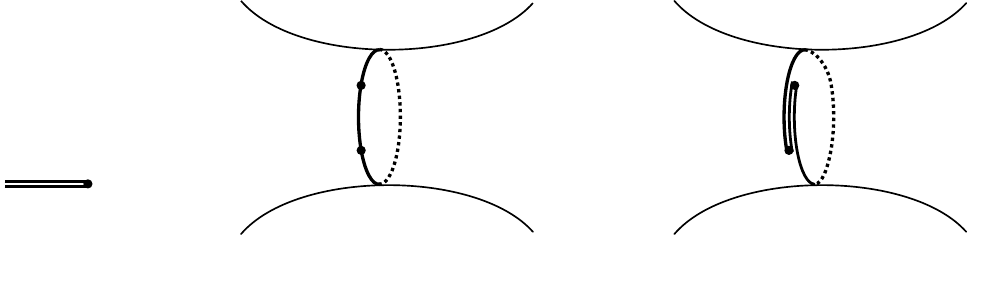 
\caption{\textbf{(a)} A global minimizer of $E_{\delta,k}$. \textbf{(b)} A critical point of $E_{\delta,k}$ corresponding to a closed geodesic. \textbf{(c)} The ``zig-zag'' critical point of $E_{\delta,k}$ corresponding to the same closed geodesic.}
\label{f:critical_points}
\end{small}
\end{center}
\end{figure}

Proposition~\ref{p:critical_points} shows that, beside the global minimizers $E_{\delta,k}^{-1}(4\delta^2)$ (Figure~\ref{f:critical_points}(a)) there are two other kinds of critical points of $E_{\delta,k}$: the closed geodesics smoothly parametrized with constant speed (Figure~\ref{f:critical_points}(b)), and the closed geodesics pa\-ram\-e\-trized with constant speed but non-smoothly with a zig-zag at times $\tau_0=0$ and $\tau_1$ (Figure~\ref{f:critical_points}(c)). If $\qq'\in\crit(E_{\delta,k})$ corresponds to a smoothly parametrized closed geodesic $\gamma_{\qq'}\in\crit(E)$ and $\qq''\in\crit(E_{\delta,k})$ corresponds to the same closed geodesic parametrized with a zig-zag, their energies are related by
\begin{align}
\label{e:energy_zigzag}
E_{\delta,k}(\qq'')^{1/2}=E_{\delta,k}(\qq')^{1/2}+2\delta.
\end{align}
We partition the critical point set $\crit(E_{\delta,k})$ as the disjoint union
\begin{align*}
\crit(E_{\delta,k})
=
E_{\delta,k}^{-1}(4\delta^2)
\cup
K'
\cup
K'',
\end{align*}
where $K'=\crit(E)\cap E^{-1}(0,\infty)$, while $K''$ contains zig-zag closed geodesics.

The functional setting of the energy $E_{\delta,k}:\Upsilon M\to[4\delta^2,\infty)$ is suitable for Morse theory. Indeed, $E_{\delta,k}$ can be continuously extended to the boundary $\partial \Upsilon M\subset M\times...\times M$, and we have
\begin{align}
\label{e:max_E_delta_k}
E_{\delta,k}|_{\partial \Upsilon M}
\equiv 
\sup E_{\delta,k}
=
\big( \delta + \sqrt{(k-1)}\rho \big)^2.
\end{align}
In particular, every sublevel set $\Upsilon M^{\leq b}$, for $b<\big( \delta + \sqrt{(k-1)}\rho \big)^2$, is a compact subset of $\Upsilon M$. Therefore, the classical min-max theorem is available in this setting: for each non-trivial cohomology class $\mu\in H_*(\Upsilon M,\Upsilon M^{<4\rho^2})$, the min-max value $c_g(\mu)$ defined in~\eqref{e:minmax_delta_k} is a critical value of $E_{\delta,k}$. 
Actually, each closed geodesic $\gamma\in\crit(E)\cap E^{-1}(\ell^2)$, with $\ell>0$, is contained in $\Upsilon M=\Upsilon_{\delta,k}M$ if and only if
\begin{align*}
k
>
\overline k(\ell,\delta)
:=
1 + \frac{(\ell-\delta)^2}{\rho^{2}} 
\end{align*}
Indeed, if we define 
\begin{align*}
\tau_0 & :=0,\\
\tau_{i} & :=\frac{\delta}{\ell}+(i-1)\frac{ \ell-\delta}{ (k-1)\ell },
\qquad i=1,...,k-1,
\end{align*}
we readily verify that $\qq=(\gamma(\tau_0),...,\gamma(\tau_{k-1}))$ belongs to $\Upsilon M$, and $\gamma_{\qq}=\gamma$.

The following two lemmas compare the Morse indices in the settings $\Lambda M$ and $\Upsilon M$. The reader may skip their rather technical proofs on a first reading.

\begin{lem}
\label{l:indices}
Let $\gamma\in\crit(E)\cap E^{-1}(0,\infty)$ be a closed geodesic. For each $\delta\in(0,\rho)$ and $k> \overline k(E(\gamma)^{1/2},\delta)$, if $\qq\in \crit(E_{\delta,k})$ is such that $\gamma_{\qq}=\gamma$, then
\[\ind(E,\gamma)=\ind(E_{\delta,k},\qq),
\qquad
\nul(E,\gamma)=\nul(E_{\delta,k},\qq).\]
\end{lem}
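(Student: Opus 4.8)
\textbf{Proof plan for Lemma~\ref{l:indices}.}

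The plan is to realize $\Upsilon M$ near the critical point $\qq$ as the image of a natural ``broken geodesic'' construction inside $\Lambda M$, and then to compare the Hessians of $E$ and $E_{\delta,k}$ there. First I would recall the classical finite-dimensional reduction for the energy functional: for a closed geodesic $\gamma$ of length $\ell=E(\gamma)^{1/2}$ and a sufficiently fine subdivision $0=s_0<s_1<\dots<s_k=1$ with consecutive distances below $\rho$, the subspace $\Lambda_{\mathrm{fin}}M$ of loops that are geodesic on each $[s_i,s_{i+1}]$ is a submanifold of $\Lambda M$ on which $E$ restricts with the same critical points, Morse index and nullity as $E$ on all of $\Lambda M$ in a neighborhood of $\gamma$ — this is Morse's broken-geodesic approximation, and it works because the negative and null directions of $\diff^2E(\gamma)$ are spanned by ``coarse'' Jacobi-type variations that are already tangent to $\Lambda_{\mathrm{fin}}M$ (the orthogonal complement inside $\Lambda M$ carries a positive-definite Hessian coming from the strict convexity of the energy of short geodesic arcs with fixed endpoints). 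The condition $k>\overline k(\ell,\delta)$ is precisely what guarantees that $\gamma$, subdivided at the times $\tau_i$ described just before the lemma, lies in $\Upsilon M$ and hence that such a $\qq$ exists.

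The second step is to identify $\Upsilon M$ near $\qq$ with $\Lambda_{\mathrm{fin}}M$ near $\gamma$, up to a reparametrization. The only discrepancy between the two constructions is the placement of the breakpoints in the parameter interval: in $\Lambda_{\mathrm{fin}}M$ the times $s_i$ are fixed, while in $\Upsilon M$ the times $\tau_i(\qq)$ move with $\qq$ (through the dependence on $\sigma(\qq)$, see \eqref{e:tau_1}–\eqref{e:tau_i}). However, moving the breakpoints in $[0,1]$ is achieved by precomposing each loop with a diffeomorphism of $\R/\Z$, and $E$ is invariant under such reparametrizations only up to the speed factor — but the whole point of the choice of $\tau_1$ in \eqref{e:tau_1}, established in Proposition~\ref{p:critical_points}, is that at a critical point the curve has constant speed, so near $\qq$ the map $\qq\mapsto\gamma_\qq$ differs from the fixed-breakpoint parametrization by a diffeomorphism of $\Upsilon M$ onto $\Lambda_{\mathrm{fin}}M$ that fixes $\gamma$ and whose differential at $\gamma$ is the identity on the relevant subspace. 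More concretely, I would write $\Upsilon M$ (via the diffeomorphism $\iota$ onto $\Upsilon'M$ from the proof of Proposition~\ref{p:critical_points}) as $SM\times M^{\times(k-2)}$ with the first geodesic arc forced to have length $\delta$; this one linear constraint $d(q_0,q_1)=\delta$ is transverse to the critical submanifold directions, and removing it changes neither the index nor the nullity, because on the ``radial'' direction it kills the energy $\delta^2/\tau$ of a single short arc is again strictly convex in its endpoints.

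The third step is the actual index/nullity comparison. Having placed both functionals on (canonically identified) finite-dimensional manifolds, I would show $\diff^2 E_{\delta,k}(\qq)$ and $\diff^2(E|_{\Lambda_{\mathrm{fin}}M})(\gamma)$ are congruent quadratic forms — equal up to the positive-definite contribution of the single ``breakpoint-time'' direction and the single ``first-arc-length'' direction, both of which I have just argued contribute positively and hence affect neither $\ind$ nor $\nul$. For the null directions one checks directly that the kernel of $\diff^2 E_{\delta,k}(\qq)$, after projecting away those two auxiliary directions, consists exactly of the (discretized) periodic Jacobi fields along $\gamma$, matching $\ker\diff^2E(\gamma)$ modulo the tangent to the $S^1$-orbit, so both nullities equal $\nul(E,\gamma)$; and the signature computation then forces the indices to agree as well. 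The main obstacle I anticipate is the bookkeeping in the second step: carefully checking that the moving breakpoints and the length-$\delta$ constraint, while they genuinely change the ambient manifold, contribute only positive-definite directions to the Hessian at $\qq$, so that one can invoke the standard fact that adding a positive-definite normal direction leaves the Morse index and nullity of a non-degenerate-in-that-direction critical point unchanged. All the geometric input for this — constant speed at critical points, strict convexity of short-arc energy, the broken-geodesic approximation — is available, so the argument is a (somewhat lengthy) verification rather than a new idea.
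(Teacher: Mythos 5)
Your general strategy — relate $E_{\delta,k}$ near $\qq$ to Milnor's fixed-breakpoint broken-geodesic energy $F$ and compare Hessians — is the right starting point and is also where the paper begins. But the specific route you propose has two substantive gaps, and the paper's proof sidesteps both.

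First, you want to ``identify $\Upsilon M$ near $\qq$ with $\Lambda_{\mathrm{fin}}M$ near $\gamma$, up to a reparametrization,'' and then argue that the moving-breakpoint direction and the length-$\delta$ constraint are ``auxiliary directions'' on which the Hessian is positive definite. This does not go through as stated. The two manifolds have different dimensions ($nk-1$ versus $nk$), the breakpoint times $\tau_i(\qq)$ and the constraint $d(q_0,q_1)=\delta$ do not give an orthogonal splitting of $\Tan_{\qq}\Upsilon M$ with respect to $\diff^2E_{\delta,k}(\qq)$, and there is no reason for the Hessian to block-diagonalize against a cross term. Adding/removing a direction on which a quadratic form is positive definite only preserves index and nullity if that direction is orthogonal (with respect to the form) to the rest; you have not arranged that, and trying to arrange it would amount to re-deriving the paper's comparison in harder language. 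The paper instead builds a smooth map $\psi$ from a neighborhood $U\subset M^{\times k}$ of the sampled point $\qq'$ into $\Upsilon M$, with $\psi(\qq')=\qq$ and the crucial pointwise inequality $E_{\delta,k}(\psi(\pp'))\leq F(\pp')$ with equality at $\qq'$. Because both sides are critical there, this immediately yields the \emph{one-sided} Hessian estimate $\diff^2E_{\delta,k}(\qq)[\diff\psi(\qq')\vv',\diff\psi(\qq')\vv']\leq\diff^2F(\qq')[\vv',\vv']$ — no decomposition of the Hessian into blocks is needed. That gives $\ind(E_{\delta,k},\qq)\geq\ind(E,\gamma)$ and $\ind+\nul(E_{\delta,k},\qq)\geq\ind+\nul(E,\gamma)$ once one knows $\diff\psi(\qq')$ is injective on the (semi-)negative subspaces $\V$ and $\W$ of $\diff^2F(\qq')$, and the reverse inequalities are free from $\Upsilon M\subset\Lambda M$.

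Second, the injectivity of $\diff\psi(\qq')$ on $\V$ and $\W$ is the genuine technical content, and your outline essentially skips it. The issue is exactly the one you flagged as ``bookkeeping'': $\diff\psi(\qq')\vv'$ decomposes as a Jacobi-field part $\Psi(\vv')$ plus a correction tangent to $\dot\gamma$ coming from $\diff\nu_i(\qq')$. The paper resolves this by observing that $\V,\W$ live in the hyperplanes $\Sigma_i\perp\dot\gamma(\theta_i)$, so the Jacobi fields they generate are pointwise orthogonal to $\dot\gamma$ on each interval, hence the orthogonal component $\Psi(\vv')$ cannot be cancelled by the tangential correction. Without this, the dimension count collapses. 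Finally, your closing claim that matching nullities plus a ``signature computation'' forces the indices to agree is not a valid inference: equal nullities alone do not determine the index, and nothing in your outline supplies the missing direction of the index inequality. The paper's two inequalities (from $\psi$ and from the inclusion $\Upsilon M\subset\Lambda M$) are what actually pin both $\ind$ and $\nul$ down.
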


\begin{proof}
We set $\theta_0:=0$, $\theta_k:=1$, and, for each $i=1,...,k-1$, we choose a time value $\theta_i\in(\tau_i(\qq),\tau_{i+1}(\qq))$ sufficiently close to $\tau_i(\qq)$ so that $d(\gamma(\theta_i),\gamma(\theta_{i+1}))<\rho$ for all $i\in\Z_k$. Notice that
\begin{align*}
0
=
\tau_0(\qq)=\theta_0
<
\tau_1(\qq)<\theta_1
<
\tau_2(\qq)<\theta_2
<
...
<
\tau_k(\qq)=\theta_k
=1.
\end{align*}
We set $q_i':=\gamma(\theta_i)$ and $\qq':=(q_0',...,q_{k-1}')$. We consider the function
\begin{align*}
F:M^{\times k}\to[0,\infty),
\qquad
F(\pp')
=
\sum_{i=0}^{k-1}
\frac{d(p_i',p_{i+1}')^2}{\theta_{i+1}-\theta_i},
\end{align*}
which is smooth on an open neighborhood of $\qq'$. Since $\qq'$ is obtained by sampling the closed geodesic $\gamma$ at times $\theta_i$, it is a critical point of $F$. Since $d(\gamma(\theta_i),\gamma(\theta_{i+1}))<\rho$ and for all $i\in\Z_k$, it is well known that
\begin{align*}
\ind(E,\gamma)=\ind(F,\qq'),
\qquad
\nul(E,\gamma)=\nul(F,\qq'),
\end{align*}
see, e.g., \cite[Theorem~16.2]{Milnor:1963rf}. For each $i\in\Z_k$, we denote by $\Sigma_i\subset\Tan_{q_i'}M$ the hyperplane orthogonal to $\dot\gamma(\theta_i)$. By the definition of the Morse indices, there exist vector subspaces $\V,\W\subset \Sigma_0\times...\times\Sigma_{k-1}$ of dimensions $\ind(E,\gamma)$ and $\ind(E,\gamma)+\nul(E,\gamma)$ respectively such that
\begin{equation}
\label{e:negative_spaces}
\begin{split}
\diff^2F(\qq')[\vv',\vv']<0,&
\qquad \forall \vv'\in\V\setminus\{0\},\\
\diff^2F(\qq')[\vv',\vv']\leq0,&
\qquad \forall \vv'\in\W.
\end{split}
\end{equation}

Now, we choose an open neighborhood $U\subset M^{\times k}$ of $\qq'$ that is small enough so that, for all $\pp'=(p_0',...,p_{k-1}')\in U$, we have 
\begin{align*}
&\delta<d(p_0',p_1')<\rho,\\
&d(p_i',p_{i+1}')<\rho,\qquad \forall i\in\Z_{k}\setminus\{0\}.
\end{align*}
We define $\zeta_{\pp'}\in\Lambda M$ to be the piecewise broken geodesic such that each restriction $\zeta_{\pp'}|_{[\theta_i,\theta_{i+1}]}$ is the shortest geodesic joining $p_i'$ and $p_{i+1}'$. Notice that $\zeta_{\qq'}=\gamma_{\qq}=\gamma$. We set
\begin{align*}
\nu_0(\pp') &:= 0,\\
\nu_1(\pp') &:= \frac{\theta_1 \delta}{d(p_0',p_1')},\\
\nu_i(\pp') &:= \nu_1(\pp') + (i-1)\frac{1-\nu_1(\pp')}{k-1},
\qquad\forall i=2,...,k.
\end{align*}
Notice that $\nu_i(\qq')=\tau_i(\qq)$. Therefore, up to replacing $U$ with a smaller neighborhood of $\qq'$, for each $\pp'\in U$, the curve $\zeta_{\pp'}$ is smooth at each time $\nu_i(\pp')$. This implies that the map
\begin{align*}
\psi:U\to\Upsilon M=\Upsilon_{\delta,k} M,
\qquad
\psi(\pp')=(\zeta_{\pp'}(\nu_0(\pp')),...,\zeta_{\pp'}(\nu_{k-1}(\pp'))).
\end{align*}
is smooth. Notice that $\psi(\qq')=\qq$ and
\begin{align*} 
E_{\delta,k}(\psi(\pp'))\leq F(\pp'),
\qquad
\forall \pp'\in U,
\end{align*}
with equality if $\pp'=\qq'$. This, together with~\eqref{e:negative_spaces}, implies that
\begin{equation}
\label{e:negative_spaces_2}
\begin{split}
\diff^2E_{\delta,k}(\qq)[\diff\psi(\qq')\vv',\diff\psi(\qq')\vv']\leq \diff^2F(\qq')[\vv',\vv']<0,&
\qquad \forall \vv'\in\V\setminus\{0\},\\
\diff^2E_{\delta,k}(\qq)[\diff\psi(\qq')\vv',\diff\psi(\qq')\vv']\leq \diff^2F(\qq')[\vv',\vv']\leq0,&
\qquad \forall \vv'\in\W.
\end{split}
\end{equation}

Each $\vv'=(v_0',...,v_{k-1}')\in\Tan_{\qq'}M^{\times k}$ defines a unique continuous and piecewise smooth vector field $\xi_{\vv'}$ along $\gamma$ such that, for all $i=0,...,k-1$,  $\xi_{\vv'}(\theta_i)=v_i'$ and the restriction $\xi_{\vv'}|_{[\theta_i,\theta_{i+1}]}$ is a Jacobi vector field. Since $d(q_i,q_{i+1})<\rho$, the geodesic $\gamma|_{[\tau_i(\qq),\tau_{i+1}(\qq)]}$ is the shortest one joining $q_i$ and $q_{i+1}$. This readily implies that the map
\begin{align*}
\Psi:\Tan_{\qq'}M^{\times k}\to \Tan_{\qq}M^{\times k},
\qquad
\Psi(\vv')=(\xi_{\vv'}(\tau_0(\qq)),...,\xi_{\vv'}(\tau_{k-1}(\qq)))
\end{align*}
is injective on $\V$ and on $\W$. The differential of $\psi$ at $\qq'$ is given by
\begin{align*}
\diff\psi(\qq')\vv'
=
\Psi(\vv') + \big(\dot\gamma(\tau_0(\qq))\diff\nu_0(\qq')\vv',...,\dot\gamma(\tau_{k-1}(\qq))\diff\nu_{k-1}(\qq')\vv' \big)
\end{align*}
Consider a non-zero $\vv'\in\V\cup\W$, and set $\vv=(v_0,...,v_{k-1}):=\Psi(\vv')$. By the injectivity of $\Psi$, at least one component of $\vv$, say $v_i$, is non-zero. Since both tangent vectors $v_{i-1}'$ and $v_i'$ are orthogonal to $\dot\gamma$, the whole Jacobi field $\xi_{\vv'}|_{[\theta_{i-1},\theta_i]}$ is pointwise orthogonal to $\dot\gamma$, and so is $v_i=\xi_{\vv'}(\tau_i(\qq))$. Therefore, $v_i+\dot\gamma(\tau_{i}(\qq))\diff\nu_{i}(\qq')\vv'$ is non-zero, which shows that the differential $\diff\psi(\qq')$ is injective on both $\V$ and $\W$. This, together with~\eqref{e:negative_spaces_2}, implies that
\begin{align*}
\ind(E_{\delta,k},\qq) & \geq \ind(E,\gamma),\\
\ind(E_{\delta,k},\qq)+\nul(E_{\delta,k},\qq) & \geq \ind(E,\gamma)+\nul(E,\gamma).
\end{align*}
Since $\Upsilon M$ is a subspace of $\Lambda M$, the opposite inequalities hold as well.
\end{proof}

\begin{lem}
\label{l:indices_zigzag}
Let $\gamma\in\crit(E)\cap E^{-1}(0,\infty)$ be a closed geodesic. For each $\delta\in(0,\rho)$ and integer $k> \overline k(E(\gamma)^{1/2}+2\delta,\delta)$, let $\qq'\in \crit(E_{\delta,k})$ be such that $\gamma_{\qq'}=\gamma$, and $\qq''\in \crit(E_{\delta,k})$ be the associated zig-zag critical point, i.e.\ $q_0''=q_0'$, $q_1''=q_1'$, and $E_{\delta,k}(\qq'')^{1/2}=E_{\delta,k}(\qq'')^{1/2}+2\delta$. Then
\begin{align*}
\ind(E_{\delta,k},\qq') & \leq \ind(E_{\delta,k},\qq''),
\\
\ind(E_{\delta,k},\qq')+\nul(E_{\delta,k},\qq') & \leq \ind(E_{\delta,k},\qq'')+\nul(E_{\delta,k},\qq'').
\end{align*}
\end{lem}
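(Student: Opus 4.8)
The idea is to compare the Hessians of $E_{\delta,k}$ at $\qq'$ and $\qq''$ by building an explicit linear injection from the negative (resp.\ non-positive) subspace at $\qq'$ into the corresponding subspace at $\qq''$, mimicking the strategy of Lemma~\ref{l:indices}. Both critical points correspond to the \emph{same} underlying closed geodesic $\gamma$; they differ only in how the parameter interval $[0,\tau_1]$ is traversed: for $\qq'$ the arc $\gamma|_{[0,\tau_1']}$ runs monotonically from $q_0'$ to $q_1'=\exp_{q_0'}(v_0')$, while for $\qq''$ the arc $\gamma|_{[0,\tau_1'']}$ first backtracks and then returns, covering an extra length $2\delta$ (hence~\eqref{e:energy_zigzag}). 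Away from the indices $0,1$ the two broken-geodesic configurations sit along $\gamma$ in the same way. So I would first fix sampling times $\theta_0=0<\tau_1'<\theta_1<\tau_2'<\dots<\theta_{k-1}<\tau_k'=1$ adapted to $\qq'$ exactly as in the proof of Lemma~\ref{l:indices}, reduce to the finite-dimensional model function $F$ on $M^{\times k}$ associated with the $\theta_i$, and thereby obtain subspaces $\V,\W\subset\Sigma_0\times\dots\times\Sigma_{k-1}$ realizing $\ind(E,\gamma)$ and $\ind(E,\gamma)+\nul(E,\gamma)$ with $\diff^2F(\qq'_\theta)$ negative (resp.\ non-positive) on them. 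By Lemma~\ref{l:indices} these also compute $\ind(E_{\delta,k},\qq')$ and $\ind(E_{\delta,k},\qq')+\nul(E_{\delta,k},\qq')$.

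Next I would transport these subspaces to the tangent space at $\qq''$. The crucial geometric point is that the Jacobi-field extension map used in Lemma~\ref{l:indices} is insensitive to orientation of the geodesic arc: a Jacobi field along $\gamma|_{[0,\tau_1'']}$ (the zig-zag arc) is determined by its boundary values at the two endpoints, and since $q_0''=q_0'$, $q_1''=q_1'$, and the arc is still a minimizing-in-pieces geodesic of length $<\rho$ on each of the $k$ subintervals (this is where $k>\overline k(E(\gamma)^{1/2}+2\delta,\delta)$ enters — it guarantees the zig-zag configuration actually lies in $\Upsilon_{\delta,k}M$ with all consecutive distances $<\rho$), the same construction $\Psi$ and $\psi$ as in Lemma~\ref{l:indices} produce a smooth map from a neighborhood of the sampled point in $M^{\times k}$ into $\Upsilon M$ landing at $\qq''$, with $E_{\delta,k}\circ\psi\le F$ and equality at the base point. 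The injectivity argument for $\diff\psi$ goes through verbatim, because the offending correction term is again a multiple of $\dot\gamma$ at the sample times, which is orthogonal to the Jacobi-field values. This yields
\[
\ind(E_{\delta,k},\qq'')\ge\ind(E,\gamma)=\ind(E_{\delta,k},\qq'),
\]
and likewise for index-plus-nullity.

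The main obstacle — and the part that needs genuine care rather than a copy of Lemma~\ref{l:indices} — is that along the zig-zag arc the naive ``sample $\gamma$ at times $\theta_i$'' recipe no longer gives a configuration whose broken-geodesic reconstruction equals $\gamma_{\qq''}$, because $\gamma|_{[0,\tau_1'']}$ is not injective. One has to choose the first sample time(s) on the \emph{correct branch} of the backtracking arc, i.e.\ set $\theta_0=0$ but place $\theta_1$ just after the turning point on the outgoing branch, so that $d(\gamma(\theta_0),\gamma(\theta_1))<\rho$ holds with the arc $\gamma|_{[\theta_0,\theta_1]}$ being genuinely minimizing between its endpoints. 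Equivalently, one works with the reparametrized curve and checks that the turning point of the zig-zag is a smooth point of the reconstructed broken geodesic, so that $\psi$ remains smooth near the base point. Once the sampling is set up so that $\zeta_{\qq''_\theta}=\gamma_{\qq''}$, the rest of the argument is identical to Lemma~\ref{l:indices}, and in fact one could phrase the whole lemma as: the map $\qq\mapsto(\ind,\nul)(E_{\delta,k},\qq)$ on the set of critical points lying over a fixed closed geodesic is monotone under the operation of inserting a zig-zag, since inserting a zig-zag only enlarges the domain of admissible variations at the first two vertices.
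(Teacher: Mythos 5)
Your proposal correctly identifies the overall strategy (reduce to the Milnor model $F$ for $\gamma$, then push negative/non-positive directions forward to $\qq''$ via a suitable map into $\Upsilon M$), and you rightly sense that the delicate step is constructing that map. But the fix you offer does not close the gap, and the inequality you then invoke is actually false.

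First, the map. You claim ``the same construction $\Psi$ and $\psi$ as in Lemma~\ref{l:indices} produce a smooth map \dots landing at $\qq''$, with $E_{\delta,k}\circ\psi\le F$ and equality at the base point.'' The construction of Lemma~\ref{l:indices} takes a tuple $\pp'$ near the sampled geodesic, reconstructs the broken geodesic $\zeta_{\pp'}$ through those points, and resamples it. That output is always a broken geodesic \emph{without} a backtracking tail: no choice of sampling times along $\gamma$ (or along $\gamma_{\qq''}$) makes the shortest-geodesic reconstruction traverse the arc between $q_0''$ and $q_1''$ twice, which is what the zig-zag does. So this $\psi$ sends the base point to $\qq'$, not to $\qq''$. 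Your proposed remedy of ``placing $\theta_1$ just after the turning point on the outgoing branch'' changes which arc of $\gamma$ one of the segments lies on, but it does not reintroduce the doubly-covered tail, so $\zeta_{\qq''_\theta}\ne\gamma_{\qq''}$ still holds. The paper handles this by modifying the construction: after reconstructing and reparametrizing the broken geodesic $\beta_{\pp}$, it \emph{explicitly inserts} the backtracking tail (the curve $\overline\alpha_{\pp}$, then $\alpha_{\pp}$) and only then resamples. This extra ingredient is essential and is not a formal variant of Lemma~\ref{l:indices}.

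Second, the energy comparison. Even granting a map $\psi$ with $\psi(\qq)=\qq''$, the inequality $E_{\delta,k}\circ\psi\le F$ with equality at the base point cannot hold, since by~\eqref{e:energy_zigzag}
\[
E_{\delta,k}(\qq'')=\bigl(E_{\delta,k}(\qq')^{1/2}+2\delta\bigr)^2=\bigl(F(\qq)^{1/2}+2\delta\bigr)^2>F(\qq).
\]
The correct comparison, which is what the inserted zig-zag buys, is $E_{\delta,k}(\psi(\pp))\le\bigl(2\delta+F(\pp)^{1/2}\bigr)^2$, with equality at $\pp=\qq$. At a critical point of $F$ this yields the Hessian bound
\[
\diff^2E_{\delta,k}(\qq'')[\diff\psi(\qq)\vv,\diff\psi(\qq)\vv]
\le
\frac{2\delta+F(\qq)^{1/2}}{F(\qq)^{1/2}}\,\diff^2F(\qq)[\vv,\vv],
\]
and since the prefactor is positive the index and index-plus-nullity inequalities follow once one shows $\diff\psi(\qq)$ is injective on $\V$ and $\W$ (that part of your argument is fine). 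Without the $+2\delta$ modification the second-order comparison does not even make sense, so the conclusion does not follow from what you wrote.
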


\begin{proof}
The proof is somewhat analogous to the one of Lemma~\ref{l:indices}, but requires some extra ingredients. We consider the time values 
$1+\tau_1(\qq') =: \sigma_1
>
\sigma_2
>
...
>\sigma_k :=0$
such that
\begin{align*}
\gamma_{\qq''}(t\tau_{i+1}(\qq'')+(1-t)\tau_i(\qq''))
=
\gamma(t\sigma_{i+1}+(1-t)\sigma_i),
\qquad
\forall t\in[0,1].
\end{align*}
We choose arbitrary values
$
0=:\theta_0<\theta_1<...<\theta_k:=1
$
such that 
\begin{align*}
&\{\theta_1,...,\theta_{k-1}\}
\cap
\{\sigma_1\ \mathrm{mod}\ 1,...,\sigma_{k-1}\ \mathrm{mod}\ 1\}=\varnothing,\\
&d(\gamma_{\qq'}(\theta_0),\gamma_{\qq'}(\theta_1))>\delta,\\
&d(\gamma_{\qq'}(\theta_i),\gamma_{\qq'}(\theta_{i+1}))<\rho,\quad
\forall i=0,...,k-1.
\end{align*}
The function
\begin{align*}
F:M^{\times k}\to[0,\infty),
\qquad
F(\pp)
=
\sum_{i=0}^{k-1}
\frac{d(p_i,p_{i+1})^2}{\theta_{i+1}-\theta_i},
\end{align*}
is smooth on an open neighborhood of $\qq:=(\gamma_{\qq'}(\theta_0),...,\theta_{\qq'}(\theta_{k-1}))$. Since $\qq$ is obtained by sampling the closed geodesic $\gamma$ at times $\theta_i$, it is a critical point of $F$. By Lemma~\ref{l:indices} and \cite[Theorem~16.2]{Milnor:1963rf}, we have
\begin{align*}
\ind(E_{\delta,k},\qq')
=
\ind(E,\gamma)
=
\ind(F,\qq).
\end{align*}
Therefore, if we denote by $\Sigma_i\subset\Tan_{q_i}M$ the hyperplane orthogonal to $\dot\gamma(\theta_i)$, we can find vector subspaces $\V,\W\subset \Sigma_1\times...\times\Sigma_{k-1}$ of dimensions $\ind(E,\gamma)$ and $\ind(E,\gamma)+\nul(E,\gamma)$ respectively such that
\begin{equation}
\label{e:negative_eigespaces_zigzag}
\begin{split}
\diff^2F(\qq)[\vv,\vv]<0,&
\qquad \forall \vv\in\V\setminus\{0\},\\
\diff^2F(\qq)[\vv,\vv]\leq0,&
\qquad \forall \vv\in\W. 
\end{split}
\end{equation}

We choose an open neighborhood $U\subset M^{\times k}$ of $\qq$ that is small enough so that $d(p_0,p_{1})>\delta$ and $d(p_i,p_{i+1})<\rho$ for all $\pp=(p_0,...,p_{k-1})\in U$ and $i\in\Z_{k}\setminus\{0\}$. 
We define $\overline\beta_{\pp}\in\Lambda M$ to be the piecewise broken geodesic such that each restriction $\overline\beta_{\pp}|_{[\theta_i,\theta_{i+1}]}$ is the shortest geodesic joining $p_i$ and $p_{i+1}$. Notice that $F(\pp)=E(\overline\beta_{\pp})$. Moreover, $\overline\beta_{\qq}=\gamma_{\qq'}=\gamma$, which implies
\begin{align*}
F(\qq)=E(\overline\beta_{\qq})=E_{\delta,k}(\qq').
\end{align*}
We set $\overline\nu(\pp):=\delta\theta_1 d(p_0,p_1)^{-1}\in(0,\theta_1)$ and notice that, since the restriction $\overline\beta_{\pp}|_{[0,\theta_1]}$ is a geodesic, we have 
\[
d\big(\overline\beta_{\pp}(0),\overline\beta_{\pp}(\overline\nu(\pp))\big)
=
\delta.
\]
We set
\begin{align*}
\nu(\pp)
:=
\frac
{\delta}
{\delta+\sqrt{(1-\overline\nu(\pp))\int_{\overline\nu(\pp)}^{1} \|\dot{\overline\beta}_{\pp}(t)\|_g^2\,\diff t}}.
\end{align*}
We define $\beta_{\pp}\in\Lambda M$ so that the restrictions $\beta_{\pp}|_{[0,\nu(\pp)]}$ and  $\beta_{\pp}|_{[\nu(\pp),1]}$ are affine reparametrizations of $\overline\beta_{\pp}|_{[0,\overline\nu(\pp)]}$ and  $\overline\beta_{\pp}|_{[\overline\nu(\pp),1]}$ respectively; namely,
\begin{align*}
\beta_{\pp}(t)
:=
\left\{
  \begin{array}{lll}
    \overline\beta_{\pp}\big(t\,\tfrac{\overline\nu(\pp)}{\nu(\pp)}\big), &  & t\in[0,\nu(\pp)], \vspace{5pt}\\ 
    \overline\beta_{\pp}\big(\overline\nu(\pp)+ (t-\nu(\pp))\,\tfrac{1-\overline\nu(\pp)}{1-\nu(\pp)}\big), &  & t\in[\nu(\pp),1].
  \end{array}
\right.
\end{align*}
The choice of this reparametrization guarantees that
\begin{align*}
E(\beta_{\pp})\leq E(\overline\beta_{\pp}) = F(\pp).
\end{align*}
Moreover, $\nu(\qq)=\overline\nu(\qq)=\tau_1(\qq')$ and $\beta_{\qq}=\overline\beta_{\qq}=\gamma_{\qq'}=\gamma$, and therefore
\begin{align*}
E(\beta_{\qq})=F(\qq).
\end{align*}
We define $\overline\alpha_{\pp}:\R/(1+2\nu(\pp))\Z\to M$ by suitably adding a zig-zag to $\beta_{\qq}$ as follows
\begin{align*}
\overline\alpha_{\pp}(t)
:=
\left\{
  \begin{array}{lll}
    \beta_{\pp}(t), &  & t\in[0,\nu(\pp)], \vspace{5pt}\\ 
    \beta_{\pp}(2\nu(\pp)-t), &  & t\in[\nu(\pp),2\nu(\pp)],\vspace{5pt}\\ 
    \beta_{\pp}(t-2\nu(\pp)), &  & t\in[2\nu(\pp),1+2\nu(\pp)],
  \end{array}
\right.
\end{align*}
and we define $\alpha_{\pp}\in\Lambda M$ by $\alpha_{\pp}(t)=\overline\alpha_{\pp}(t\,(1+2\nu(\pp)))$. The energies of $\alpha_{\pp}$ and $\beta_{\pp}$ are related by
\begin{align*}
E(\alpha_{\pp}) = (2\delta + E(\beta_{\pp})^{1/2})^2.
\end{align*}
Moreover, $\alpha_{\qq}=\gamma_{\qq''}$.
We set
\begin{align*}
\eta_i(\pp) &
:= 
\frac{k-i}{k-1}
\left(
1 + \nu(\pp)
\right),
\qquad\forall i=1,...,k,
\end{align*}
and notice that $\eta_i(\qq)=\sigma_i$.
Up to replacing $U$ with a smaller neighborhood of $\qq$, for each $\pp\in U$ the curve $\beta_{\pp}$ is smooth at each time $\eta_i(\pp)$. Therefore, the map
\begin{align*}
\psi:U\to\Upsilon M=\Upsilon_{\delta,k} M,
\qquad
\psi(\pp)=(\beta_{\pp}(0),\beta_{\pp}(\eta_1(\pp)),\beta_{\pp}(\eta_2(\pp)),...,\beta_{\pp}(\eta_{k-1}(\pp)))
\end{align*}
is smooth, and satisfies 
\[\psi(\qq)=\qq''\in\crit(E_{\delta,k}).\] Notice that
$E_{\delta,k}(\psi(\pp))\leq E(\alpha_{\pp})$
with equality if $\pp=\qq$. This, together with the other energy inequalities pointed out so far, provides
\begin{align*}
E_{\delta,k}(\psi(\pp)) & \leq (2\delta+F(\pp)^{1/2})^2,\quad \forall\pp\in U,\\
E_{\delta,k}(\psi(\qq)) & = E_{\delta,k}(\qq'')  = (2\delta+F(\qq)^{1/2})^2.
\end{align*}
This, together with the fact that $\psi(\qq)=\qq''$ and $\qq$ are critical points of $E_{\delta,k}$ and $F$ respectively, implies 
\begin{align*}
\diff^2E_{\delta,k}(\qq'')[\diff\psi(\qq)\vv,\diff\psi(\qq)\vv]
\leq
\frac{2\delta+F(\qq)^{1/2}}{F(\qq)^{1/2}} \,\diff^2 F(\qq)[v,v].
\end{align*}
Therefore, by~\eqref{e:negative_eigespaces_zigzag}, we infer
\begin{align*}
\diff^2E_{\delta,k}(\qq'')[\diff\psi(\qq)\vv,\diff\psi(\qq)\vv]<0,&
\qquad \forall \vv\in\V\setminus\{0\},\\
\diff^2E_{\delta,k}(\qq'')[\diff\psi(\qq)\vv,\diff\psi(\qq)\vv]\leq0,&
\qquad \forall \vv\in\W, 
\end{align*}
which provides the following lower bounds for the Morse indices
\begin{align*}
\ind(E_{\delta,k},\qq'') \geq \dim(\diff\psi(\qq)\V),
\qquad
\ind(E_{\delta,k},\qq'')+\nul(E_{\delta,k},\qq'') \geq \dim(\diff\psi(\qq)\W).
\end{align*}
Finally, the same argument as in the proof of Lemma~\ref{l:indices} implies that $\diff\psi(\qq)$ is injective on both $\V$ and $\W$, i.e.
\begin{align*}
\dim(\diff\psi(\qq)\V)&= \dim(\V) = \ind(E_{\delta,k},\qq'),
\\
\dim(\diff\psi(\qq)\W)&=\dim(\W)=\ind(E_{\delta,k},\qq')+\nul(E_{\delta,k},\qq').
\qedhere
\end{align*}
\end{proof}

\section{Zoll Riemannian metrics}
\label{s:Zoll}

\subsection{The evaluation map on $\Upsilon M$}
\label{s:evaluation}

Let us quickly prove the following property of the evaluation map $\Ev:\Upsilon M\to SM$ defined in~\eqref{e:evaluation}. Here, as before, we denote by $SM$ the unit tangent bundle of $(M,\delta^{-2}g)$.

\begin{lem}
\label{l:injectivity}
For each $b\geq4\delta^2$, the cohomology homomorphism \[\Ev^*:H^*(SM)\hookrightarrow H^*(\Upsilon M^{\leq b})\] is injective.
\end{lem}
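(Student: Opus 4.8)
The plan is to exhibit an explicit section of the bundle projection composed with $\Ev$, or more precisely to factor the identity of $H^*(SM)$ through $\Ev^*$. The key observation is that $\Ev\colon\Upsilon M\to SM$ has a natural right inverse up to homotopy. Given a unit vector $(q,v)\in SM$ (here $\|v\|_g=\delta$), set $q_0:=q$, $q_1:=\exp_q(v)$, and let all the remaining points coincide, $q_2=\dots=q_{k-1}=q_1$; then $\sum_{i\in\Z_k\setminus\{0\}}d(q_i,q_{i+1})^2=d(q_1,q_0)^2+0+\dots+0=\delta^2<\rho^2$, so $\qq=(q_0,\dots,q_{k-1})\in\Upsilon M$, and moreover $E_{\delta,k}(\qq)=(\delta+\sqrt{(k-1)\delta^2})^2$. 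That is too large in general, so instead I would spread the ``return'' evenly: this is exactly the standard trick, but one must be a little careful to stay inside $\Upsilon M^{\leq b}$ for $b$ as small as $4\delta^2$.

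First I would define $s\colon SM\to\Upsilon M$ as follows. For $(q,v)\in SM$, let $q_0=q$, $q_1=\exp_q(v)$, and then let $q_2,\dots,q_{k-1}$ interpolate back from $q_1$ to $q_0$ along the reversed minimal geodesic from $q_1$ to $q_0$ (which has length $\delta<\rho$), at equally spaced parameter values. Then for $i\in\Z_k\setminus\{0\}$ the distances $d(q_i,q_{i+1})$ are each $\delta/(k-1)$ for $i=1,\dots,k-1$ and $d(q_0,q_1)=\delta$; hence $\sum_{i\in\Z_k\setminus\{0\}}d(q_i,q_{i+1})^2=(k-1)(\delta/(k-1))^2=\delta^2/(k-1)<\rho^2$, so indeed $s(q,v)\in\Upsilon M$. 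The resulting loop $\gamma_{s(q,v)}$ traverses the geodesic from $q$ to $q_1$ and then returns; its energy is $E_{\delta,k}(s(q,v))=(\delta+\sqrt{(k-1)\cdot\delta^2/(k-1)})^2=(\delta+\delta)^2=4\delta^2$. Thus $s$ takes values in $\Upsilon M^{\leq 4\delta^2}\subset\Upsilon M^{\leq b}$ for every $b\geq 4\delta^2$, and $s$ is smooth because it is built from the exponential map and minimizing geodesics below the injectivity radius. Finally $\Ev\circ s=\id_{SM}$ by construction, since $\Ev(s(q,v))=\exp_{q}^{-1}(q_1)=v$.

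Therefore $s^*\circ\Ev^*=(\Ev\circ s)^*=\id$ on $H^*(SM)$, where $\Ev$ here denotes its restriction to $\Upsilon M^{\leq b}$ (which still makes sense and still equals $\exp_{q_0}^{-1}(q_1)$ on that sublevel set). Hence $\Ev^*\colon H^*(SM)\to H^*(\Upsilon M^{\leq b})$ is injective, being a split monomorphism. I expect the only real point requiring care is checking that $s$ lands in the closed sublevel set $\Upsilon M^{\leq 4\delta^2}$ and that it is genuinely well-defined and continuous — i.e.\ that the minimal geodesic from $q_1$ back to $q_0$ is unique (true since $d(q_0,q_1)=\delta<\rho$) and varies smoothly with $(q,v)$, and that the interpolation points are honestly inside $\Upsilon M$ (which I verified above via the strict inequality $\delta^2/(k-1)<\rho^2$). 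Everything else is formal functoriality of cohomology.
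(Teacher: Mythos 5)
Your proof is correct and is essentially identical to the paper's: both construct the section $s\colon SM\to E_{\delta,k}^{-1}(4\delta^2)\subset\Upsilon M$ sending $(q,v)$ to the global minimizer of $E_{\delta,k}$ whose first edge is the geodesic from $q$ to $\exp_q(v)$ (you describe it via equally-spaced points on the return geodesic; the paper describes the same loop by $\gamma_\qq(t)=\gamma_\qq(1-t)$ with $\tau_1=1/2$), and both conclude by observing $\Ev\circ s=\id_{SM}$ so $\Ev^*$ is a split injection.
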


\begin{proof}
We define the homeomorphism 
\begin{align*}
\iota:SM\to E_{\delta,k}^{-1}(4\delta^2)\subset\Upsilon M,\qquad \iota(q_0,v_0)=\qq,
\end{align*}
where $\qq=(q_0,...,q_{k-1})$ is the unique element in $E_{\delta,k}^{-1}(4\delta^2)$ such that $\exp_{q_{0}}(v_0)=q_1$.
Namely, $\gamma_{\qq}$ is the periodic curve such that $\tau_1(\qq)=1/2$,  $\dot\gamma_{\qq}(0^+)=2v_{0}$, and 
$\gamma_{\qq}(t)=\gamma_{\qq}( 1-t)$ for all $t\in[0,1/2]$, as in Figure~\ref{f:critical_points}(a). Since the composition $\Ev\circ\iota$ is the identity, the lemma follows for $b=4\delta^2$. The lemma readily follows also for any $b>4\delta^2$, since $E_{\delta,k}^{-1}(4\delta^2)\subset\Upsilon M^{\leq b}$.
\end{proof}

As in~\eqref{e:omega}, we denote by $\omega$ the generator of $\Ev^*(H^{2n-1}(SM))$. The following lemma is the main ingredient for the proof of Theorem~\ref{t:main}.

\begin{lem}
\label{l:main_lemma}
Assume that there exists a cohomology class $\mu\in H^d(\Upsilon M,\Upsilon M^{\leq4\rho^2})$ such that $\omega\smile\mu\neq0$ in $H^{d+2n-1}(\Upsilon M,\Upsilon M^{\leq4\rho^2})$. If 
\[c_g(\mu)=c_g(\omega\smile\mu)=:\ell^2,\] 
then $g$ is a Besse manifold, and either $\ell$ or $\ell-2\delta$ is a common multiple of the periods of the unit-speed closed geodesics of $(M,g)$. Moreover, the critical set \[K:=\crit(E)\cap(E^{-1}(\ell^2)\cup E^{-1}((\ell-2\delta)^2)),\] 
has Morse index $\ind(E,K)\leq d$.
\end{lem}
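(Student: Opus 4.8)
The plan is to run a Lusternik--Schnirelmann-type argument in the finite-dimensional loop space $\Upsilon M$, exploiting the cup-length estimate forced by $\omega\smile\mu\neq0$ together with the structure of $\crit(E_{\delta,k})$ described in Proposition~\ref{p:critical_points}. First I would record the standard consequence of classical Lusternik--Schnirelmann theory: if $c_g(\mu)=c_g(\omega\smile\mu)=\ell^2$, then, writing $L:=\crit(E_{\delta,k})\cap E_{\delta,k}^{-1}(\ell^2)$, the restriction $\omega|_{W}\neq0$ in $H^{2n-1}(W)$ for every neighborhood $W$ of $L$ in $\Upsilon M$ (this is the cup-product deformation lemma, e.g. \cite[Theorem~1.1]{Viterbo:1997pi}, applied to $\mu$ and $\omega$, noting that $\omega$ is pulled back from $SM$ and so restricts nontrivially precisely when $W$ carries enough of the evaluation image). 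In particular $L$ is nonempty and $\ell^2>4\delta^2$ (the minimizer set $E_{\delta,k}^{-1}(4\delta^2)$ carries all of $\Ev^*H^*(SM)$ but no extra cohomology, so one cannot have $c_g(\mu)=4\delta^2$ unless $\mu$ itself lives at that level, which it does not since $\mu\in H^{d}(\Upsilon M,\Upsilon M^{\leq4\rho^2})$ with $d\geq i(M)>0$).

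Next I would analyze what $L$ can be. By Proposition~\ref{p:critical_points} and the partition $\crit(E_{\delta,k})=E_{\delta,k}^{-1}(4\delta^2)\cup K'\cup K''$, the set $L$ consists of genuine closed geodesics of $(M,g)$ of energy $\ell^2$ together with zig-zag closed geodesics, the latter corresponding via~\eqref{e:energy_zigzag} to closed geodesics of energy $(\ell-2\delta)^2$. I would argue that the evaluation map $\Ev$ restricted to $L$ must be cohomologically surjective in degree $2n-1$ onto (a summand of) $H^{2n-1}(SM)$: the nonvanishing of $\omega|_W$ for all neighborhoods $W$ of $L$, combined with $\omega=\Ev^*(\text{generator})$, forces $\Ev(L)$ to be cohomologically "large" in $SM$, and since $H^{2n-1}(SM)\cong\Z$ is the top group and is detected by the fundamental class, this means $\Ev(L)$ must be all of $SM$ — equivalently, every unit vector occurs as $\exp_{q_0}^{-1}(q_1)$ for some $\qq\in L$. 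Translating back: through every point and in every direction there passes either a closed geodesic of length $\ell$ or one of length $\ell-2\delta$. A connectedness/openness argument on $SM$ — the sets of directions realized by length-$\ell$ geodesics and by length-$(\ell-2\delta)$ geodesics are each closed, their union is all of $SM$, and $SM$ is connected since $M$ is (here one may need to treat $\ell=\ell-2\delta$, i.e. $\delta=0$, as excluded, or observe the two closed sets could only be separated if one is empty) — then shows that \emph{one} of the two values, call it $\ell_0\in\{\ell,\ell-2\delta\}$, is a common period: every unit-speed geodesic closes up with period $\ell_0$, i.e. $(M,g)$ is Besse and $\ell_0$ is a common multiple of the minimal periods.

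For the index bound $\ind(E,K)\leq d$, I would again invoke Lusternik--Schnirelmann: the min-max class $\mu$ has $c_g(\mu)=\ell^2$, so $\mu$ restricts nontrivially to an arbitrarily small neighborhood of $L$ inside $\Upsilon M$ relative to its boundary; since a closed geodesic (or zig-zag) $\qq\in L$ has a local Morse model whose relative cohomology is concentrated in degree $\ind(E_{\delta,k},\qq)$ (using that $E$ is Morse--Bott on Besse manifolds by Wilking, hence so is $E_{\delta,k}$ near $K$ by Lemmas~\ref{l:indices} and~\ref{l:indices_zigzag}), the class $\mu$ of degree $d$ can only survive if $\ind(E_{\delta,k},\qq)\leq d$ for the relevant critical manifold. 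Finally Lemma~\ref{l:indices} gives $\ind(E,\gamma)=\ind(E_{\delta,k},\qq)$ for genuine geodesics, and Lemma~\ref{l:indices_zigzag} gives $\ind(E_{\delta,k},\qq')\leq\ind(E_{\delta,k},\qq'')$, so in all cases the index of the critical manifold $K$ in $\Lambda M$ is at most $d$.

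The main obstacle I anticipate is the step identifying $\Ev(L)=SM$ and then deducing that a \emph{single} value $\ell_0$ works for \emph{all} directions simultaneously: one must rule out a "mixed" scenario in which some directions are covered only by length-$\ell$ geodesics and others only by length-$(\ell-2\delta)$ ones. Handling this requires care — presumably one shows that at the min-max level $\ell^2$ the zig-zag geodesics and the genuine geodesics cannot both contribute to supporting $\omega$ unless they come from a single underlying Besse structure, perhaps by noting that a zig-zag at level $\ell^2$ comes from a geodesic at level $(\ell-2\delta)^2<\ell^2$ which would then be a \emph{lower} critical value, contradicting minimality of $c_g(\omega\smile\mu)$ unless that geodesic's iterate/configuration also sits at level $\ell^2$; the bookkeeping between the two possible common periods, and the role of $\delta$ being small, is where the real work lies.
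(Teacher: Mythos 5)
Your proof is correct and follows essentially the same route as the paper: the paper runs the Lusternik--Schnirelmann step as a direct contradiction at a single $(q,v)\in SM$ (if the geodesic through $(q,v)$ were neither closed with period dividing $\ell$ nor closed with period dividing $\ell-2\delta$, the open set $U=\{\qq:\Ev(\qq)\neq(q,v)\}$ would contain $\crit(E_{\delta,k})\cap E_{\delta,k}^{-1}(\ell^2)$, forcing $\omega|_U\neq0$ by LS while $\Ev|_U^*$ kills $H^{2n-1}(SM)$), which is your $\Ev(L)=SM$ claim in different words, and the index bound is obtained exactly as you describe from the local cohomology of $K_{\delta,k}$ together with Lemmas~\ref{l:indices} and~\ref{l:indices_zigzag}. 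The ``mixed scenario'' you flag as the main remaining obstacle is already disposed of by the connectedness argument you sketched: the two closed subsets of $SM$ are in fact \emph{disjoint}, since $\ell=mp$ and $\ell-2\delta=m'p$ for a prime period $p$ and integers $m>m'\geq1$ would give $2\delta=(m-m')p\geq p\geq 2\rho>2\delta$, a contradiction, and a connected space cannot be covered by two disjoint nonempty closed sets. The paper records this disjointness (``$\ell$ and $\ell-2\delta$ cannot both be common periods'') but leaves the connectedness step implicit, so your treatment is, if anything, slightly more explicit than the original.
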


\begin{proof}
Assume by contradiction that $c_g(\mu)=c_g(\omega\smile\mu)=:\ell^2$, but there exists $(q,v)\in SM$ such that the unit-speed geodesic \[\gamma(t)=\exp_q(tv/\|v\|_g)\] is either not periodic, or it is periodic but neither $\ell$ nor $\ell-2\delta$ are multiples of its minimal period. By~\eqref{e:energy_zigzag}, the condition on $\ell-2\delta$ implies that none of the zig-zag critical points $\qq\in K'' \cap E^{-1}(\ell^2)$ satisfies $\Ev(\qq)=(q,v)$.
Therefore, the open subset
\begin{align*}
U:=\big\{\qq=(q_0,q_1,...,q_{k-1})\in\Upsilon M\ \big|\ (q_0,\exp_{q_0}^{-1}(q_1))\neq(q,v)  \big\}
\end{align*}
contains the set of critical points $\crit(E_{\delta,k})\cap E^{-1}(\ell^2)$, and the classical Lusternik-Schnirelmann's theorem \cite[Theorem~1.1]{Viterbo:1997pi} implies that the cohomology class $\omega|_{U}\in H^{2n-1}(U)$ is non-zero. Consider the commutative diagram
\begin{equation*}
 \xymatrix{
     U \ar@{^{(}->}[rr]^{\incl}\ar[drr]_{\Ev|_U}  & & \Upsilon M
     \ar[d]^{\Ev} \\
  & & SM
 }  
\end{equation*}
Since $\omega$ is the generator of the image $\Ev^*(H^{2n-1}(SM))$,  $\omega|_{U}$ is the generator of the image $\Ev|_U^*(H^{2n-1}(SM))$. However, the homomorphism $\Ev|_U^*:H^{2n-1}(SM)\to H^{2n-1}(U)$ is zero, since the map $\Ev|_U$ is not surjective. This implies that $\omega|_U=0$ in $H^{2n-1}(U)$, which is a contradiction.

So far, we have proved that $g$ is Besse, and $\ell$ or $\ell-2\delta$ is a common multiple of the periods of the unit-speed geodesics.
Now, consider the critical sets
\begin{align*}
K & :=\crit(E)\cap\big(E^{-1}(\ell^2)\cup E^{-1}((\ell-2\delta)^2)\big),\\
K_{\delta,k} & :=\crit(E_{\delta,k})\cap E_{\delta,k}^{-1}(\ell^2)
\end{align*}
Let $\epsilon>0$ be small enough so that $(\ell^2,\ell^2+\epsilon)$ does not contain critical values of $E_{\delta,k}$. Since $c_g(\mu)=\ell$, the relative cohomology group
\begin{align*}
H^d(\Upsilon M^{<\ell^2+\epsilon},\Upsilon M^{<\ell^2})\cong H^{d-\ind(E_{\delta,k},K_{\delta,k})}(K_{\delta,k}) 
\end{align*}
is nontrivial. In particular
\begin{align}
\label{e:ind_K_delta_k}
\ind(E_{\delta,k},K_{\delta,k})\leq d.
\end{align}
Since $\delta<\rho$, $2\delta$ is smaller than the minimal period of the unit-speed geodesics of $(M,g)$. This readily implies that the values $\ell$ and $\ell-\delta$ cannot both be common periods for the unit-speed geodesics of $(M,g)$, and we have two possible cases:
\begin{itemize}
\item If $\ell$ is a common period for the unit-speed geodesics of $(M,g)$, then $K_{\delta,k}$ does not contain zig-zag closed geodesics, and indeed $K=K_{\delta,k}$.

\item If $\ell-2\delta$ is a common period for the unit-speed geodesics of $(M,g)$, then $K_{\delta,k}$ contains only zig-zag closed geodesics, and more precisely of those closed geodesics contained in $K$.
\end{itemize}
In both cases, Lemmas~\ref{l:indices} and~\ref{l:indices_zigzag}, together with the inequality~\eqref{e:ind_K_delta_k}, imply
\[
\ind(E,K)
\leq
\ind(E_{\delta,k},K_{\delta,k})
\leq
d.
\qedhere
\] 
\end{proof}

\subsection{Two subordinated homology classes in the Zoll case}

In this subsection, we will consider a Zoll Riemannian manifold, and prove the ``if'' claim in Theorem~\ref{t:main}.

\begin{lem}\label{l:Zoll}
Let $M$ be a closed manifold of dimension $n\geq 2$ admitting a simple Zoll Riemannian metric, and $g$ a Zoll Riemannian metric on $M$ whose unit-speed closed geodesics have minimal period $\ell>0$. For each $\delta\in(0,\rho)$ and for each integer $k>\overline k(\ell,\delta)$, we consider the space $\Upsilon M=\Upsilon_{\delta,k}M$. For each $\epsilon\in(4\delta^2,\ell^2)$,
there exists a relative homology class
\[h\in H_{i(M)+2n-1}(\Upsilon M^{\leq \ell^2},\Upsilon M^{< \epsilon})\]
such that $h$ and $h\frown \omega|_{\Upsilon M^{\leq\ell^2}}$ are not in the kernel of the homomorphism
\begin{align*} 
H_{*}(\Upsilon M^{\leq \ell^2},\Upsilon M^{<\epsilon})
\ttoup^{\incl_*}
H_{*}(\Lambda M,\Lambda M^{<\epsilon}).
\end{align*}
\end{lem}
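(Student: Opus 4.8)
The plan is to build the homology class $h$ on the Zoll side by transporting a known generator of $H_{i(M)+2n-1}(\Lambda M,M)$ through the finite-dimensional model, exploiting the perfectness of the energy functional established in Section~\ref{s:preliminaries}. First I would fix $\epsilon\in(4\delta^2,\ell^2)$ and note that, since $g$ is Zoll with minimal period $\ell$, the only critical values of $E_{\delta,k}$ in $[4\delta^2,\ell^2]$ are $4\delta^2$ (the minimizers, whose sublevel deformation-retracts to $\Ev^{-1}$ of a copy of $SM$) and $\ell^2$ itself, with $K_{\delta,k}=\crit(E_{\delta,k})\cap E_{\delta,k}^{-1}(\ell^2)=K'$ the genuine prime closed geodesics (no zig-zags yet at level $\ell^2$, since the first zig-zag level is $(\ell+2\delta)^2>\ell^2$, while $(\ell-2\delta)$ is not a period). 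By Lemma~\ref{l:indices}, the critical manifold $K_{\delta,k}\cong SM$ is nondegenerate with $\ind(E_{\delta,k},K_{\delta,k})=\ind(E,K)=i(M)$ and oriented negative bundle (orientability of the negative bundle is inherited from the $\Lambda M$ side, where it holds by \cite[Proposition~13.2]{Goresky:2009fq}). Hence
\begin{align*}
H_*(\Upsilon M^{\leq\ell^2+\eta},\Upsilon M^{<\ell^2})\cong H_{*-i(M)}(SM)
\end{align*}
for small $\eta>0$, and by homotopy-invariance of sublevels away from critical values the same holds with $\Upsilon M^{\leq\ell^2}$ in place of $\Upsilon M^{\leq\ell^2+\eta}$.

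Next I would identify the relevant classes. In $H_{*-i(M)}(SM)$ pick the fundamental class $[SM]\in H_{2n-1}(SM)$; the Thom isomorphism for the oriented negative bundle sends it to a class $h'\in H_{i(M)+2n-1}(\Upsilon M^{\leq\ell^2},\Upsilon M^{<\ell^2})$. I then need to push $h'$ down to a class relative to $\Upsilon M^{<\epsilon}$ rather than $\Upsilon M^{<\ell^2}$: since there are no critical values strictly between $4\delta^2$ and $\ell^2$, the pair $(\Upsilon M^{<\ell^2},\Upsilon M^{<\epsilon})$ deformation-retracts onto $(\Upsilon M^{<4\delta^2+\eta},\Upsilon M^{<\epsilon})$ which is trivial up to the minimizer set; more usefully, the long exact sequence of the triple $(\Upsilon M^{\leq\ell^2},\Upsilon M^{<\ell^2},\Upsilon M^{<\epsilon})$ together with the computation of $H_*(\Upsilon M^{<\ell^2},\Upsilon M^{<\epsilon})$ (which is concentrated in degrees around $2n-1$, coming from the bottom critical manifold $\Ev^{-1}(SM)$ of index $0$) lets me lift $h'$ to the desired $h\in H_{i(M)+2n-1}(\Upsilon M^{\leq\ell^2},\Upsilon M^{<\epsilon})$; I would choose $h$ so that its image in $H_{i(M)+2n-1}(\Upsilon M^{\leq\ell^2},\Upsilon M^{<\ell^2})$ is $h'$.

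Then I verify the two non-vanishing statements. For the image of $h$ itself under $\incl_*:H_*(\Upsilon M^{\leq\ell^2},\Upsilon M^{<\epsilon})\to H_*(\Lambda M,\Lambda M^{<\epsilon})$: dualize. The class $j^*\alpha\in H^{i(M)}(\Upsilon M,\Upsilon M^{<4\rho^2})$ has min-max value $\ell^2$ in the Zoll case (as computed in the introduction), so $j^*\alpha$ survives to $H^{i(M)}(\Upsilon M^{\leq\ell^2},\Upsilon M^{<\ell^2})$ where, by construction, it pairs nontrivially with $h'$ via the Thom isomorphism against a generator $\nu\in H^{2n-1}(SM)$; tracing through the inclusion-induced maps and the commuting square with the $\Lambda M$ side (where $\alpha\in H^{i(M)}(\Lambda M,M)$ restricts to $j^*\alpha$) shows $\langle\alpha,\incl_* h\rangle\neq0$, hence $\incl_* h\neq0$. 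For $h\frown\omega|_{\Upsilon M^{\leq\ell^2}}$ I would use that $\omega=\Ev^*(\text{gen. of }H^{2n-1}(SM))$ and that the composition $SM\xrightarrow{\iota}E_{\delta,k}^{-1}(4\delta^2)\hookrightarrow\Upsilon M^{\leq\ell^2}\xrightarrow{\Ev}SM$ is the identity (Lemma~\ref{l:injectivity}); the cap product $h\frown\omega$ then corresponds, under the Thom isomorphism, to $[SM]\frown\nu\in H_{0}(SM)$, a nonzero point class, so it maps to the generator of $H_0$-type content that detects the class $\omega\smile\alpha$ on the $\Lambda M$ side, and the analogue of the pairing argument above gives $\incl_*(h\frown\omega)\neq0$.

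The main obstacle I expect is the bookkeeping around the relative term $\Upsilon M^{<\epsilon}$: I must be careful that replacing $\Upsilon M^{<\ell^2}$ by the smaller $\Upsilon M^{<\epsilon}$ does not destroy the Thom-class description of $h$, and that the two cap-product computations are compatible with the chosen lift. Concretely, the delicate point is showing that the lifted class $h$ (as opposed to $h'$) still pairs nontrivially with $j^*\alpha$ and that $h\frown\omega$ is still detected after inclusion into $(\Lambda M,\Lambda M^{<\epsilon})$; this requires knowing that the map $H_*(\Upsilon M^{\leq\ell^2},\Upsilon M^{<\epsilon})\to H_*(\Upsilon M^{\leq\ell^2},\Upsilon M^{<\ell^2})$ is injective in the relevant degree, which follows from the long exact sequence of the triple once I check $H_{i(M)+2n-1}(\Upsilon M^{<\ell^2},\Upsilon M^{<\epsilon})=0$ and $H_{i(M)+2n}(\Upsilon M^{<\ell^2},\Upsilon M^{<\epsilon})=0$ — i.e.\ that no spurious homology in those two degrees is created by the bottom minimizer stratum. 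For generic $n\geq2$ and $i(M)$ this is a direct degree count using $\ind=0$ and $\dim SM=2n-1$ for the minimizer manifold; the only case needing a separate glance is small $n$, which I would handle directly.
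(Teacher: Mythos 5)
Your overall framework is the right one — negative bundle of $K$ in $\Upsilon M$, Thom isomorphism, take the image of $[SM]$ — and it matches the paper's construction, but your verification step has two genuine gaps.

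First, and most seriously, your proof that $\incl_* h\neq0$ and $\incl_*(h\frown\omega)\neq0$ never invokes the one ingredient the paper actually leans on: the perfectness of $E:\Lambda M\to[0,\infty)$ for Zoll metrics, established in Section~\ref{s:preliminaries} from Goresky--Hingston. Perfectness is precisely what makes the local-to-global homomorphism $\exp_*:H_*(N_r,\partial N_r)\hookrightarrow H_*(\Lambda M,\Lambda M^{<\ell^2})$ injective; once that is known, $\incl_*h$ and $\incl_*(h\frown\omega)$ are nonzero essentially for free, because they are images of the generators $h'$ and $h'\frown\omega'$ of $H_{i(M)+2n-1}(N_r,\partial N_r)$ and $H_{i(M)}(N_r,\partial N_r)$ under a map factoring through an injection. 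Your substitute — dualize and pair against $j^*\alpha$ — is circular: the statement that $c_g(j^*\alpha)=\ell^2$, and hence that $j^*\alpha$ ``survives'' to the relevant sublevel pair, is exactly Lemma~\ref{l:Zoll_minimax}, which is proved \emph{using} Lemma~\ref{l:Zoll}. Moreover the first of your two pairings is dimensionally inconsistent: $\alpha$ lives in degree $i(M)$ while $\incl_*h$ lives in degree $i(M)+2n-1$, so $\langle\alpha,\incl_*h\rangle$ is not defined; you would need the degree-$(i(M)+2n-1)$ generator of $H^*(\Lambda M,M)$, not $\alpha$.

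Second, and more mildly, your lifting step from $(\Upsilon M^{\leq\ell^2},\Upsilon M^{<\ell^2})$ to $(\Upsilon M^{\leq\ell^2},\Upsilon M^{<\epsilon})$ is needlessly worried. Since $\epsilon>4\delta^2$ and there are no critical values of $E_{\delta,k}$ in $(4\delta^2,\ell^2)$, the anti-gradient flow gives a deformation retraction of $\Upsilon M^{<\ell^2}$ onto $\Upsilon M^{<\epsilon}$, hence $H_*(\Upsilon M^{<\ell^2},\Upsilon M^{<\epsilon})=0$ in \emph{all} degrees and the map of pairs induces an isomorphism — there is no ``spurious homology from the bottom minimizer stratum'' to rule out, because that stratum already sits below $\epsilon$. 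The paper handles this point constructively by taking $\iota:=\phi_t\circ\exp$ with $t$ large enough that $\phi_t(\exp(\partial N_r))\subset\Upsilon M^{<\epsilon}$; this gives the desired class $h:=\iota_*h'$ directly, with $\iota_*$ and $\incl_*\circ\iota_*$ injective because they factor the injective $\exp_*$ up to the identification $H_*(\Lambda M,\Lambda M^{<\epsilon})\cong H_*(\Lambda M,\Lambda M^{<\ell^2})$. I'd recommend replacing your pairing argument with this perfectness-plus-flow argument; it is cleaner and it is the step that actually carries the proof.
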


\begin{proof}
Let $K:=\crit(E)\cap E^{-1}(\ell^2)$ be the critical manifold of the non-iterated closed geodesics. By Lemma~\ref{l:indices}, for each $\delta\in(0,\rho)$ and integer $k>\overline k(\ell,\delta)$, we have $K\subset\Upsilon M:=\Upsilon_{\delta,k}M$ and, for each $\gamma_{\qq}\in K$, 
\begin{equation}
\label{e:same_indices}
\begin{split}
\ind(E,\gamma_{\qq})&=\ind(E_{\delta,k},\qq)=i(M),
\\
\nul(E,\gamma_{\qq})&=\nul(E_{\delta,k},\qq)=2n-2.
\end{split}
\end{equation}
We denote by $G$ the Riemannian metric on $\Upsilon M$ induced by $g$, i.e.
\begin{align*}
G(\vv,\ww)=\sum_{i\in\Z_{k}} g(v_i,w_i),
\qquad
\forall\vv,\ww\in\Tan_{\qq}\Upsilon M.
\end{align*}
Let $\pi:N\to K$ be the negative bundle of $E_{\delta,k}$ at $K$. Namely, for each $\qq\in K$, the fiber $\pi^{-1}(\qq)\subset\Tan_{\qq}\Upsilon M$ is the negative eigenspace  of the symmetric linear map $H_{\qq}:\Tan_{\qq}\Upsilon M\to\Tan_{\qq}\Upsilon M$ defined by
$G( H_{\qq}\,\cdot,\cdot)= \diff^2E_{\delta,k}(\qq)$.
The rank of this vector bundle is $i(M)$, according to~\eqref{e:same_indices}.
For each $r>0$, we set $N_r\subset N$ to be the $r$-neighborhood of the 0-section, measured with respect to $G$. With a slight abuse of notation, we still denote by $\exp$ the exponential map of $(\Upsilon M,G)$. We choose $r>0$ to be small enough so that $\exp|_{N_r}$ is a well defined diffeomorphism onto a neighborhood of $K$ in $\Upsilon M$, and $E_{\delta,k}(\exp_{\qq}(\vv))<E_{\delta,k}(\qq)$ for all $(\qq,\vv)\in N_r$ with $\vv\neq0$. Since $E$ has no critical values in the interval $(\ell^2, (\ell+\delta)^2)$, the arrows in the following commutative diagram are isomorphisms
\begin{align*}
 \xymatrix{
     H_*(N_r,\partial N_r) \ar[rr]^{\exp_*\ \ \ \ \ \ \ \ \ }_{\cong\ \ \ \ \ \ \ \ \ }\ar[ddrr]_{\exp_*}^{\cong}  & & H_{*}(\Upsilon M^{\leq\ell^2},\Upsilon M^{<\ell^2})\ar[dd]^{\incl_*}_{\cong} \\\\
  & & H_{*}(\Lambda M^{<(\ell+\delta)^2},\Lambda M^{<\ell^2})
 }  
\end{align*}
see \cite[Theorem~D.2]{Goresky:2009fq}. Since $E$ is a perfect functional, the exponential map also induces an injective homomorphism
\begin{align*}
 \exp_*:H_*(N_r,\partial N_r)\hookrightarrow H_{*}(\Lambda M,\Lambda M^{<\ell^2})
\end{align*}
Since $\ell^2$ is the smallest positive critical value of $E$, the restriction $E_{\delta,k}$ has no critical values in the interval $(4\delta^2,\ell^2)$. For each $\epsilon\in(4\delta^2,\ell^2)$, if we denote by $\phi_t$ the anti-gradient flow of $E_{\delta,k}$, we can fix $t>0$ large enough so that 
\[\phi_t\circ\exp(\partial N_r)\subset\Upsilon M^{<\epsilon}.\] 
If we set $\iota:=\phi_t\circ\exp$, the induced homomorphisms $\iota_*$ and $\incl_*\circ \iota_*$ in the following commutative diagram must be injective
\begin{align*}
 \xymatrix{
    \Big. H_*(N_r,\partial N_r)\ \ar@{^{(}->}[rr]^{\exp_*}  \ar@{^{(}->}[dd]_{\iota_*}  & & H_{*}(\Lambda M,\Lambda M^{<\ell^2})  \\\\
H_{*}(\Upsilon M^{\leq\ell^2},\Upsilon M^{<\epsilon})\ \ar[rr]^{\incl_*}
  & &  H_{*}(\Lambda M,\Lambda M^{<\epsilon}) \ar[uu]_{\incl_*}^{\cong}
 }  
\end{align*}
Since the closed geodesics in $K$ are not iterated, the negative bundle $N\to K$ is orientable. If $\tau\in H^{i(M)}(N_r,\partial N_r)$ denotes its Thom class with respect to any orientation, we have a Thom isomorphism
\begin{align*}
H^*(N_r)\to H^{*+i(M)}(N_r,\partial N_r),
\qquad
\mu\mapsto \tau\smile \mu.
\end{align*}
If we denote by $\omega'$ the generator of $H^{2n-1}(N_r)\cong H^{2n-1}(SM)$, and by $h'$ the generator of $H_{i(M)+2n-1}(N_r,\partial N_r)$, then $h'\frown\omega'$ is the generator of $H_{i(M)}(N_r,\partial N_r)$. Consider the evaluation map $\Ev:\Upsilon M^{\leq\ell^2}\to SM$ of Equation~\eqref{e:evaluation}, which is injective in cohomology according to Lemma~\ref{l:injectivity}. If we denote by $0_N\subset N$ the 0-section of $N$, the composition $\Ev\circ \iota|_{0_N}:0_N\to SM$ is clearly a homeomorphism. Therefore, up to changing the sign of $\omega'$,
\[\omega'=\iota^*(\omega|_{\Upsilon M^{\leq\ell^2}}).\] 
We set 
\[h:=\iota_*h'\in H_{i(M)+2n-1}(\Upsilon M^{\leq\ell^2},\Upsilon M^{<\epsilon}),\] 
and notice that
\begin{align*}
h\frown\omega|_{\Upsilon M^{\leq\ell^2}}=(\iota_*h')\frown\omega|_{\Upsilon M^{\leq\ell^2}}=\iota_*(h' \frown \iota^*(\omega|_{\Upsilon M^{\leq\ell^2}}))= \iota_*(h'\frown\omega')\neq 0
\end{align*}
in $H_{i(M)}(\Upsilon M^{\leq\ell^2},\Upsilon M^{<\epsilon})$. 
\end{proof}

In the following lemma, we will employ the notation of the introduction, and consider the cohomology classes $\alpha$ and $\omega\smile j^*\alpha$ from Equations \eqref{e:omega} and \eqref{e:alpha}.

\begin{lem}
\label{l:Zoll_minimax}
Let $M$ be a closed manifold of dimension $n\geq 2$ admitting a simple Zoll Riemannian metric, and $g$ a Zoll Riemannian metric on $M$ whose unit-speed closed geodesics have minimal period $\ell>0$. For each $\delta\in(0,\rho)$ and integer $k>\overline k(\ell,\rho/\sqrt2)$, consider the space $\Upsilon M=\Upsilon_{\delta,k}M$. Then  $c_g(j^*\alpha)=c_g(\omega\smile j^*\alpha)=\ell^2$.
\end{lem}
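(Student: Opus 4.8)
The plan is to prove the two inequalities $c_g(j^*\alpha)\ge\ell^2$ and $c_g(\omega\smile j^*\alpha)\le\ell^2$ separately, and to combine them with the elementary Lusternik--Schnirelmann inequality $c_g(j^*\alpha)\le c_g(\omega\smile j^*\alpha)$. The latter holds because, by naturality of the cup product, $j_b^*(\omega\smile j^*\alpha)=(\omega|_{\Upsilon M^{<b}})\smile j_b^*(j^*\alpha)$, so that $j_b^*(\omega\smile j^*\alpha)\neq 0$ forces $j_b^*(j^*\alpha)\neq0$; taking infima gives $c_g(j^*\alpha)\le c_g(\omega\smile j^*\alpha)$. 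The three statements together yield $\ell^2\le c_g(j^*\alpha)\le c_g(\omega\smile j^*\alpha)\le\ell^2$, hence both min-max values equal $\ell^2$.

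\textbf{Lower bound.} Here I would first use Proposition~\ref{p:critical_points} together with the hypothesis that $g$ is Zoll with minimal period $\ell$ to enumerate the critical values of $E_{\delta,k}$: the global minimum value $4\delta^2$, the values $m^2\ell^2$ of the iterated closed geodesics lying in $\Upsilon M$, and the values $(m\ell+2\delta)^2$ of the associated zig-zags. In particular $E_{\delta,k}$ has no critical value in the interval $(4\delta^2,\ell^2)$, the assumption on $k$ guaranteeing that the prime critical manifold $K=\crit(E)\cap E^{-1}(\ell^2)$ is contained in $\Upsilon M$, so that $\ell^2$ really is the first positive critical value. Since $4\delta^2<4\rho^2\le\ell^2$, for every $b\in[4\rho^2,\ell^2)$ the anti-gradient flow of $E_{\delta,k}$ deformation retracts $\Upsilon M^{<b}$ onto $\Upsilon M^{<4\rho^2}$, whence $H^*(\Upsilon M^{<b},\Upsilon M^{<4\rho^2})=0$ and $j_b^*\mu=0$ for every $\mu\in H^*(\Upsilon M,\Upsilon M^{<4\rho^2})$. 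Therefore $c_g(j^*\alpha)\ge\ell^2$ and $c_g(\omega\smile j^*\alpha)\ge\ell^2$.

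\textbf{Upper bound.} For this I would invoke Lemma~\ref{l:Zoll}. Fixing $\epsilon\in(4\delta^2,4\rho^2)$, let $h\in H_{i(M)+2n-1}(\Upsilon M^{\le\ell^2},\Upsilon M^{<\epsilon})$ be the homology class it produces, so that the image of $h\frown\omega|_{\Upsilon M^{\le\ell^2}}$ is non-zero in $H_{i(M)}(\Lambda M,\Lambda M^{<\epsilon})$. Since $\epsilon$ and $4\rho^2$ both lie below the smallest positive critical value $\ell^2$ of $E$, the sublevel sets $\Lambda M^{<\epsilon}$ and $\Lambda M^{<4\rho^2}$ deformation retract onto $M$; combined with the perfectness of the energy functional of a Zoll metric and the orientability of the negative bundle of $K$ (Section~\ref{s:preliminaries}), this gives $H^{i(M)}(\Lambda M,\Lambda M^{<\epsilon})\cong H^{i(M)}(\Lambda M,M)\cong\Z$, generated by the image of $\alpha$, and $H_{i(M)-1}(\Lambda M,M)=0$, so that the Kronecker pairing in degree $i(M)$ is perfect. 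Hence $\alpha$ pairs non-trivially with the image of $h\frown\omega|_{\Upsilon M^{\le\ell^2}}$. By naturality of the cup and cap products along the inclusions of pairs $(\Upsilon M^{\le\ell^2},\Upsilon M^{<\epsilon})\hookrightarrow(\Lambda M,\Lambda M^{<\epsilon})$ and $(\Upsilon M^{\le\ell^2},\Upsilon M^{<\epsilon})\hookrightarrow(\Upsilon M,\Upsilon M^{<4\rho^2})$, and the cap--cup adjunction $\langle\omega\smile j^*\alpha,c\rangle=\pm\langle j^*\alpha,c\frown\omega\rangle$, this non-vanishing translates into $\langle\omega\smile j^*\alpha,c\rangle\neq0$, where $c\in H_{i(M)+2n-1}(\Upsilon M,\Upsilon M^{<4\rho^2})$ is the image of $h$. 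Since $c$ is supported in $\Upsilon M^{\le\ell^2}\subseteq\Upsilon M^{<\ell^2+\epsilon'}$ for every $\epsilon'>0$, it follows that $j^*_{\ell^2+\epsilon'}(\omega\smile j^*\alpha)\neq0$, so $c_g(\omega\smile j^*\alpha)\le\ell^2$, which completes the argument.

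\textbf{Main obstacle.} I expect the upper-bound paragraph to be the delicate part. Beyond the bookkeeping of the several sublevel thresholds ($4\delta^2$, $\epsilon$, $4\rho^2$, $\ell^2$, $\ell^2+\epsilon'$) and of the two ambient spaces $\Upsilon M\subset\Lambda M$, the crucial point is to verify that the Kronecker pairing with the \emph{fixed} generator $\alpha$ is non-degenerate in degree $i(M)$; this is exactly where the standing assumption that $M$ admits a simple Zoll metric enters, through the perfectness of the energy functional and the orientability of the negative bundles recalled in Section~\ref{s:preliminaries}.
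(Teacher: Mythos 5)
Your proof is correct and follows essentially the same route as the paper's: both rely on Lemma~\ref{l:Zoll} to produce the homology class $h$, use the universal-coefficient/Kronecker-pairing argument in degree $i(M)$ together with cap--cup adjunction to obtain the upper bound, and deduce the lower bound from the absence of critical values of $E_{\delta,k}$ in $(4\delta^2,\ell^2)$. The only difference is cosmetic: the paper extracts $c_g(j^*\alpha)\le\ell^2$ and $c_g(\omega\smile j^*\alpha)\le\ell^2$ simultaneously from the nonvanishing of $(\omega\smile j_{\ell^2}^*\alpha)(h)$, whereas you split off the trivial inequality $c_g(j^*\alpha)\le c_g(\omega\smile j^*\alpha)$ first.
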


\begin{proof}
By Lemma~\ref{l:Zoll}, there exists a homology class \[h\in H_{i(M)+2n-1}(\Upsilon M^{\leq \ell^2},\Upsilon M^{< \epsilon})\]
such that both $h$ and $h\frown \omega|_{\Upsilon M^{\leq\ell^2}}$ are mapped to non-zero homology classes under the homomorphism
\begin{align*} 
(j_{\ell^2})_*=\incl_*:
H_{*}(\Upsilon M^{\leq \ell^2},\Upsilon M^{<\epsilon})
\to
H_{*}(\Lambda M,\Lambda M^{<\epsilon}).
\end{align*}
Equation~\eqref{e:cohomology_loop_space} implies that
\begin{align*}
H^{i(M)}(\Lambda M,\Lambda M^{<4\rho^2})&\cong H^{i(M)}(\Lambda M,M)\cong\Z,\\
H^{i(M)-1}(\Lambda M,\Lambda M^{<4\rho^2})&\cong H^{i(M)-1}(\Lambda M,M)=0.
\end{align*}
Therefore, by the universal coefficient theorem,
\begin{align*}
H^{i(M)}(\Lambda M,\Lambda M^{<4\rho^2})\cong\mathrm{Hom}\big(H_{i(M)}(\Lambda M,\Lambda M^{<4\rho^2}),\Z\big),
\end{align*}
and the generator $\alpha\in H^{i(M)}(\Lambda M,\Lambda M^{<4\rho^2})$ must satisfy 
\[
(\omega \smile j_{\ell^2}^*\alpha)h
=
(j_{\ell^2}^*\alpha)(h\frown\omega)
=
\alpha((j_{\ell^2})_*(h\frown\omega))
\neq0.
\]
This implies that $c_g(j^*\alpha)\leq c_g(\omega\smile j^*\alpha)\leq\ell^2$. On the other hand, $\ell$ is the smallest critical value of the energy $E|_{\Upsilon M}$ above the global minimum $4\rho^2$, and therefore we have the opposite inequality $c_g(j^*\alpha)\geq\ell^2$.
\end{proof}

\subsection{Two subordinated homology classes for arbitrary metrics}

Let $M$ be a closed Riemannian manifold of dimension $n\geq2$ equipped with a Zoll Riemannian metric $g_0$ and with an arbitrary Riemannian metric $g_1$. Their convex combinations 
\[
g_s:=(1-s)g_0+sg_1, \qquad s\in[0,1],\] 
give a path of Riemannian metrics. We will denote with a subscript or superscript $s$ the usual Riemannian objects associated with the Riemannian metric $g_s$: the exponential map $\exp^{(s)}:\Tan M\to M$, the Riemannian distance $d_s:M\times M\to[0,\infty)$, the injectivity radius $\rho_s=\injrad(M,g_s)$, and the energy  $E_s:\Lambda M\to[0,\infty)$. We set 
\begin{align*}
 d_{\max}(q_0,q_1) & :=\max_{s\in[0,1]} d_s(q_0,q_1),\quad \forall q_0,q_1\in M,\\
 \rho_{\min} & :=\min\{\rho_s\ |\ s\in[0,1]\}>0,\\
 c & :=\min\big\{\|v\|_{g_0}\|v\|_{g_s}^{-1}\ \big|\ v\in \Tan M\setminus\zerosection,\ s\in[0,1]\big\}\in(0,1],\\
 \delta_{\max} & :=\frac{c\,\rho_{\min}}{2}.
\end{align*}
We fix $\delta_0\in(0,\delta_{\max})$ small enough, $\epsilon_0:=8\delta_0^2$, and $\epsilon_1:=4\rho_1^2$ so that we have the inclusion of sublevel sets 
\[\{E_0<\epsilon_0\}\subseteq\{E_1<\epsilon_1\}\subset\Lambda M.\]
Since both these sublevel sets can be deformed onto the space of constant loops $M\subset \Lambda M$, the inclusion induces a homology isomorphism
\begin{align*}
H_*(\Lambda M,\{E_0<\epsilon_0\})
\ttoup^{\incl_*}_{\cong}
H_*(\Lambda M,\{E_1<\epsilon_1\}).
\end{align*}

We denote by $\ell_0>0$ the minimal period of the unit-speed geodesics of the Zoll metric $g_0$. By Lemma~\ref{l:Zoll}, for each integer $k_0>\overline k_0(\ell_0,\delta_0)$, if we set 
\[\Upsilon^{(0)} M=\Upsilon_{\delta_0,k_0}^{(0)}M,\] 
there exists a relative homology class
$h\in H_{i(M)+2n-1}(\Upsilon^{(0)} M,\Upsilon^{(0)} M^{< \epsilon_0})$
such that $h$ and $h\frown \omega_{0}$ are not in the kernel of the homomorphism
\begin{align} 
\label{e:inclusion_last}
H_{*}(\Upsilon^{(0)} M,\Upsilon^{(0)} M^{<\epsilon_0})
\ttoup^{\incl_*}
H_{*}(\Lambda M,\{E_0<\epsilon_0\}).
\end{align}
Here, $\omega_{0}\in H^{2n-1}(\Upsilon^{(0)}M)$ is the cohomology class~\eqref{e:omega} for the Riemannian metric $g_0$.  Let $\sigma$ be a relative cycle representing $h$, which we can see as a continuous map of the form 
\[\sigma:(\Sigma,\partial \Sigma)\to(\Upsilon^{(0)} M,\Upsilon^{(0)} M^{<\epsilon_0})\subset (\Lambda M,\{E_0<\epsilon_0\})\] 
for a suitable simplicial complex $\Sigma$ with simplicial boundary $\partial \Sigma$. Hereafter, we will treat the points $\sigma(z)$ as elements of the loop space $\Lambda M$.

\begin{lem}
\label{l:T}
For each $\delta_1>0$ small enough, there exists a continuous function $T:\Sigma\to(0,1)$ such that
\begin{align*}
d_{1}(\sigma(z)(0),\sigma(z)(T(z)))=\delta_1,
\qquad\forall z\in\Sigma.
\end{align*}
\end{lem}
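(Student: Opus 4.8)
The plan is to produce $T(z)$ as the time at which the loop $\sigma(z)$, starting from $\sigma(z)(0)$ and travelled with respect to arclength in the metric $g_1$, has covered exactly $g_1$-distance $\delta_1$. First I would note that each $\sigma(z)$ is, by construction of $\Upsilon^{(0)}M$, a $g_0$-broken geodesic, hence in particular a piecewise-smooth, rectifiable, non-constant loop; moreover its first arc $\sigma(z)|_{[0,\tau_1^{(0)}(z)]}$ is a $g_0$-geodesic of $g_0$-length $\delta_0$, so $\sigma(z)$ is non-constant on every initial interval. Define
\[
L(z,t):=\int_0^t \|\dot\sigma(z)(r)\|_{g_1}\,\diff r,
\]
the $g_1$-arclength travelled up to time $t$. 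For fixed $z$, the function $t\mapsto L(z,t)$ is continuous, non-decreasing, with $L(z,0)=0$, and \emph{strictly} increasing on $[0,\tau_1^{(0)}(z)]$ because on that interval $\sigma(z)$ is a non-constant $g_0$-geodesic and hence has nowhere-vanishing velocity. Its total value $L(z,1)$ is the full $g_1$-length of $\sigma(z)$, which is bounded below uniformly: indeed $L(z,1)\geq d_1(\sigma(z)(0),\sigma(z)(\tau_1^{(0)}(z)))$, and the latter is comparable to the $g_0$-length $\delta_0$ of the first arc via the constant $c$, so $L(z,1)\geq c\,\delta_0>0$ for all $z$.

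Next I would choose $\delta_1$: take any $\delta_1\in(0,\,c\,\delta_0)$, so that $0<\delta_1<L(z,1)$ for every $z\in\Sigma$. By the intermediate value theorem there is a time $t$ with $L(z,t)=\delta_1$; by the strict monotonicity of $L(z,\cdot)$ on $[0,\tau_1^{(0)}(z)]$ and the bound $L(z,\tau_1^{(0)}(z))\geq c\delta_0>\delta_1$, the first such time lies in the open interval $(0,\tau_1^{(0)}(z))\subset(0,1)$ and is moreover \emph{unique} within that subinterval, where $L(z,\cdot)$ is a strictly increasing smooth function of $t$. Define $T(z)$ to be that time. Then $\sigma(z)|_{[0,T(z)]}$ is a $g_1$-rectifiable initial arc of $g_1$-length exactly $\delta_1$; since it is contained in a $g_0$-geodesic arc of $g_0$-length $\leq\delta_0<c\rho_{\min}\leq\rho_{\min}/2\leq\rho_1/2$, the endpoints $\sigma(z)(0)$ and $\sigma(z)(T(z))$ are closer than $\rho_1$ and in fact $d_1(\sigma(z)(0),\sigma(z)(T(z)))=\delta_1$, because for a curve shorter than $\rho_1$ contained in a normal ball the $g_1$-distance between its endpoints does not exceed its $g_1$-length, and here I would additionally use that the first arc of $\sigma(z)$ is a $g_0$-geodesic, so it has no backtracking and the chord equals the arclength for $\delta_1$ small — more carefully, shrinking $\delta_1$ further if needed so that the arc stays inside a $g_1$-normal ball, the $g_1$-exponential map restricted there is a radial isometry and the equality of distance with arclength is exact.

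Finally I would check continuity of $T$. Since $\sigma:\Sigma\to\Lambda M=W^{1,2}(\R/\Z,M)$ is continuous and $\Sigma$ is a compact simplicial complex, the map $(z,t)\mapsto L(z,t)$ is jointly continuous on $\Sigma\times[0,1]$ (the $g_1$-speed integrand depends continuously on the $W^{1,2}$-datum, uniformly in $t$ on the smooth first arc). On a neighbourhood of each $z_0$, the restriction of $L$ to $\Sigma\times[0,\tau_1^{(0)}]$ has $\partial_t L>0$ along the fibre $\{z_0\}$, so the implicit function theorem (or simply the elementary fact that the inverse of a jointly continuous, fibrewise strictly increasing function is continuous) gives that the unique solution $t=T(z)$ of $L(z,t)=\delta_1$ depends continuously on $z$. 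This yields the desired continuous $T:\Sigma\to(0,1)$.

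The main obstacle is the regularity bookkeeping at the breakpoints of the broken geodesic $\sigma(z)$ together with the fact that the ``loop data'' varies only $W^{1,2}$-continuously in $z$; this is handled by confining $T(z)$ to the \emph{first} arc $[0,\tau_1^{(0)}(z)]$, where $\sigma(z)$ is a genuine $g_0$-geodesic of fixed length $\delta_0$, depends smoothly on its initial point and direction, and the $g_1$-arclength function is therefore strictly increasing and jointly continuous, so none of the pathologies of general $W^{1,2}$ loops intervene.
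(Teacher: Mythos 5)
Your proposal has a genuine gap at its core: you define $T(z)$ as the time at which the $g_1$-\emph{arclength} $L(z,t)=\int_0^t\|\dot\sigma(z)(r)\|_{g_1}\,\diff r$ equals $\delta_1$, but the lemma requires the $g_1$-\emph{distance} $d_1(\sigma(z)(0),\sigma(z)(T(z)))$ to equal $\delta_1$, and these are different quantities here. The initial arc of $\sigma(z)$ is a $g_0$-geodesic, not a $g_1$-geodesic, so its $g_1$-arclength is generically strictly greater than the $g_1$-distance between its endpoints. Your attempt to close this gap — the claim that for a short $g_0$-geodesic inside a $g_1$-normal ball ``the $g_1$-exponential map restricted there is a radial isometry and the equality of distance with arclength is exact'' — is incorrect: radial isometry of the exponential map equates distance with arclength only for curves that are $g_1$-\emph{radial geodesics} from the center, which a $g_0$-geodesic is not unless $g_0=g_1$. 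The discrepancy is of order $\delta_1^3$, so shrinking $\delta_1$ makes it small but never zero, whereas the lemma demands exact equality (the equality is used in Lemma~\ref{l:sigma_s} to land exactly inside $\Upsilon^{(1)}_{\delta_1,k_1}M$, whose defining constraint $d_1(q_0,q_1)=\delta_1$ is exact).

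The fix, which is what the paper does, is to work directly with the $g_1$-distance function $t\mapsto d_1\bigl(\sigma(z)(0),\sigma(z)(t)\bigr)$ rather than with arclength: one shows that $\partial_t d_1\bigl(q,\exp^{(0)}_q(tv)\bigr)^2>0$ for all $(q,v)\in SM$ and all small $t>0$, using the asymptotic $(\exp^{(1)}_q)^{-1}\circ\exp^{(0)}_q(tv)=tv+o(t)$, and then applies the implicit function theorem to solve $d_1\bigl(q,\exp^{(0)}_q(tv)\bigr)=\delta_1$ smoothly in $(q,v)$. The remaining parts of your argument — restricting attention to the first $g_0$-geodesic arc $[0,\tau_1(\sigma(z))]$ to avoid the breakpoints, the uniform lower bound coming from the fixed $g_0$-length $\delta_0$, and the continuity of $T$ via fibrewise strict monotonicity — are all sound and parallel to the paper's proof; only the central object (arclength versus distance) needs to be replaced.
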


\begin{proof}
We will denote by  
$\tau_1:\Upsilon^{(0)} M\to(0,1)$
the function \eqref{e:tau_1} associated to $g_0$, and by $SM$ the unit tangent bundle of $(M,g_0)$. If $\tau''\in(0,\delta_0)$ is sufficiently small, the function
\begin{gather*}
F:SM\times[0,\tau'')\to [0,\infty),
\\
F(q,v,t)
=
d_1(q,\exp_q^{(0)}(tv))^2
=
\|(\exp_q^{(1)})^{-1}\circ\exp_q^{(0)}(tv)\|_{g_1}^2
\end{gather*}
is smooth. For each $(q,v)\in SM$, the function $F(q,v,\cdot)$ has a unique global minimizer at $t=0$, and $F(q,v,0)=0$. Since 
\begin{align*}
(\exp_x^{(1)})^{-1}\circ\exp_x^{(0)}(tv)=tv+o(t),
\end{align*}
we readily see that there exists $\tau'\in(0,\tau'')$ such that
\begin{align*}
\tfrac{\diff}{\diff t} F(q,v,t)>0,
\qquad\forall (q,v)\in SM,\ t\in(0,\tau').
\end{align*}
By the implicit function theorem, for each $\delta_1>0$ small enough there exists a smooth function $T':SM\to(0,\tau')$ such that
\begin{align*}
F(q,v,T'(q,v))=\delta_1,\qquad
\forall (q,v)\in SM.
\end{align*}
Now, for each $z\in\Sigma$, the curve $\gamma_z:=\sigma(z)|_{[0,\tau_1(\sigma(z))]}$ is a geodesic of $(M,g_0)$ with speed 
$\|\dot\gamma(0^+)\|_{g_0}=\delta_0/\tau_1(\sigma(z))$. If we set $q_z:=\gamma_z(0)$ and $v_z:=\dot\gamma_z(0^+)/\|\dot\gamma_z(0^+)\|_{g_0}$, we have
\[
F(q_z,v_z,t)=d_1\big(\gamma_z(0),\gamma_z(t\,\tau_1(\sigma(z))/\delta_0)\big)^2.
\]
The desired continuous function is given by 
$T(z):=T'(q_z,v_z)\,\tau_1(\sigma(z))/\delta_0$.
\end{proof}

For each $\tau\in(0,1)$, we introduce the subspace 
\begin{align*}
U_\tau:=\left\{\gamma\in\Lambda M
\ \left|\ 
  \begin{array}{@{}l@{}}
    \gamma(0)\neq\gamma(t)\ \ \forall t\in[0,\tau] \vspace{5pt}\\ 
   \displaystyle
   \max_{t\in(0,\tau]}d_{\max}(\gamma(0),\gamma(t))<\rho_{\min}
  \end{array}
\right.\right\}.
\end{align*}
We fix $\tau\in(0,1)$ small enough to that the support of our cycle $\sigma(\Sigma)$ is contained in $U_\tau$. 

\begin{lem}
\label{l:sigma_s}
For each $\delta_1\in(0,\rho_{\min})$ small enough, integer $k_1\in\N$ large enough, and $\tau>0$ small enough, if we set 
\[\Upsilon^{(1)}M:=\Upsilon_{\delta_1,k_1}^{(1)}M,\]  
there exists a homotopy $\sigma_s:\Sigma\to U_\tau$, with $s\in[0,1]$, such that $\sigma_0=\sigma$, $\sigma_1(\Sigma)\subset U_\tau\cap\Upsilon^{(1)}M$, and $s\mapsto E_1(\sigma_s(z))$ is monotonically decreasing for all $z\in\Sigma$.
\end{lem}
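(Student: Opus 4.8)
The plan is to deform $\sigma$ in two stages. In the first stage I would straighten each loop $\sigma(z)$ --- a broken $g_0$-geodesic with $k_0$ vertices --- into a broken $g_1$-geodesic through a suitable string of $k_1$ sampled points; in the second stage I would merely reparametrize that broken $g_1$-geodesic so that it becomes an honest element of $\Upsilon^{(1)}M=\Upsilon_{\delta_1,k_1}^{(1)}M$. Both stages must keep the support inside $U_\tau$ and must not increase $E_1$. To set things up: choose $\delta_1>0$ small enough that Lemma~\ref{l:T} supplies the continuous function $T:\Sigma\to(0,1)$ with $d_1(\sigma(z)(0),\sigma(z)(T(z)))=\delta_1$; the construction in its proof in fact gives $T(z)<\tau_1(\sigma(z))$, where $\tau_1$ is the function~\eqref{e:tau_1} associated with $g_0$, so that $\sigma(z)|_{[0,T(z)]}$ is a subarc of the first $g_0$-geodesic edge of $\sigma(z)$, of $g_0$-length less than $\delta_0$. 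Set $\theta_0(z):=0$ and $\theta_i(z):=T(z)+(i-1)\tfrac{1-T(z)}{k_1-1}$ for $i=1,\dots,k_1$ (so $\theta_1(z)=T(z)$, $\theta_{k_1}(z)=1$, and the last $k_1-1$ subintervals of $[T(z),1]$ have equal length), and put $\qq(z):=(\sigma(z)(\theta_0(z)),\dots,\sigma(z)(\theta_{k_1-1}(z)))$, a continuous $\Sigma$-family in $M^{\times k_1}$.

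The first step is a $z$-independent speed bound. Since $\Sigma$ is compact and $\sigma(\Sigma)\subset\Upsilon^{(0)}M$, the energy $E_0$ is bounded on $\sigma(\Sigma)$ by some $E_{\max}$; and from the explicit description of $\Upsilon_{\delta_0,k_0}^{(0)}M$ (a loop $\gamma_{\qq}$ has $g_0$-speed $\delta_0+\sigma(\qq)=E_0(\gamma_{\qq})^{1/2}$ on its first edge, and, using $d_0(q_i,q_{i+1})\le\sigma(\qq)/\sqrt{k_0-1}$, $g_0$-speed at most $\sqrt{k_0-1}\,E_0(\gamma_{\qq})^{1/2}$ on the remaining ones) one gets $\|\dot{\sigma(z)}(t)\|_{g_1}\le C_1$ for all $z$ and a.e.\ $t$, with $C_1=c^{-1}\sqrt{(k_0-1)E_{\max}}$ independent of $z$. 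Hence $d_1(\sigma(z)(\theta_i(z)),\sigma(z)(\theta_{i+1}(z)))\le C_1(\theta_{i+1}(z)-\theta_i(z))$, so that $d_1(q_0,q_1)=\delta_1$ while $\sum_{i\in\Z_{k_1}\setminus\{0\}}d_1(q_i,q_{i+1})^2\le C_1^2/(k_1-1)$; therefore, for $k_1$ large, $\qq(z)\in\Upsilon^{(1)}M$, all the geodesic chords that enter below are shorter than $\rho_1$, and $z\mapsto\qq(z)$ and $z\mapsto\gamma_{\qq(z)}^{(1)}$ are continuous.

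For $s\in[0,\tfrac12]$ I would run, on each interval $[\theta_i(z),\theta_{i+1}(z)]$, the classical broken-geodesic straightening: with $m:=(1-2s)\theta_i(z)+2s\,\theta_{i+1}(z)$, replace $\sigma_s(z)|_{[\theta_i(z),m]}$ by the constant-speed $g_1$-geodesic from $\sigma(z)(\theta_i(z))$ to $\sigma(z)(m)$ and leave $\sigma_s(z)=\sigma(z)$ on $[m,\theta_{i+1}(z)]$ (cf.\ \cite[\S16]{Milnor:1963rf}). Differentiating the $E_1$-contribution of the interval in $m$, and using $|\tfrac{\diff}{\diff m}d_1(\sigma(z)(\theta_i(z)),\sigma(z)(m))|\le\|\dot{\sigma(z)}(m)\|_{g_1}$, one checks via an elementary inequality of the form $(va-D)^2\ge0$ that it is non-increasing; hence $s\mapsto E_1(\sigma_s(z))$ is non-increasing, $\sigma_0=\sigma$, and $\sigma_{1/2}(z)=:\zeta_z$ is the broken $g_1$-geodesic through $\qq(z)$ with breakpoint times $\theta_i(z)$. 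For $s\in[\tfrac12,1]$ I would change only the parametrization: writing $A(z):=(k_1-1)\sum_{i\in\Z_{k_1}\setminus\{0\}}d_1(q_i,q_{i+1})^2$ and $t_1(s,z):=(2-2s)T(z)+(2s-1)\tfrac{\delta_1}{\delta_1+\sqrt{A(z)}}$, let $\sigma_s(z)$ be the broken $g_1$-geodesic through $\qq(z)$ whose first edge occupies $[0,t_1(s,z)]$ and whose remaining $k_1-1$ edges occupy equal subintervals of $[t_1(s,z),1]$. Then $\sigma_{1/2}(z)=\zeta_z$ and $\sigma_1(z)=\gamma_{\qq(z)}^{(1)}\in\Upsilon^{(1)}M$ --- since $\tfrac{\delta_1}{\delta_1+\sqrt{A(z)}}$ is precisely the value of $\tau_1(\qq(z))$ prescribed by~\eqref{e:tau_1} for $g_1$ --- while $E_1(\sigma_s(z))=\tfrac{\delta_1^2}{t_1(s,z)}+\tfrac{A(z)}{1-t_1(s,z)}$ is a strictly convex function of $t_1(s,z)\in(0,1)$ whose unique minimum is attained exactly at $t_1=\tfrac{\delta_1}{\delta_1+\sqrt{A(z)}}$; since $t_1(s,z)$ runs monotonically from $T(z)$ to that minimizer, $s\mapsto E_1(\sigma_s(z))$ is monotonically decreasing on $[\tfrac12,1]$, hence on $[0,1]$. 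This last observation --- that $E_1$, along the one-parameter family of reparametrizations that only adjust the length of the first edge, is convex with minimum exactly at the parametrization prescribed by $\Upsilon^{(1)}M$ --- is the one structural ingredient; it is what turns the second stage into a manifestly energy-monotone move.

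Finally, I would shrink $\tau$ below the positive minima over the compact $\Sigma$ of $T$ and of $z\mapsto\tfrac{\delta_1}{\delta_1+\sqrt{A(z)}}$, keeping $\sigma(\Sigma)\subset U_\tau$; then for every $s$ and $z$ the arc $\sigma_s(z)|_{[0,\tau]}$ lies inside the first edge of $\sigma_s(z)$. For $s\le\tfrac12$ that first edge is a short $g_1$-geodesic arc issuing from $q_0$ followed by a subarc of the first ($g_0$-)geodesic edge of $\sigma(z)$ --- of $d_{\max}$-diameter $<\rho_{\min}$ once $\delta_0,\delta_1$ are small, and meeting $q_0$ only at $t=0$ --- and for $s\ge\tfrac12$ it is the embedded $g_1$-geodesic of $g_1$-length $\delta_1$ from $q_0$ to $q_1$; in either case $\sigma_s(z)\in U_\tau$. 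Together with $\sigma_0=\sigma$ and $\sigma_1(\Sigma)\subset U_\tau\cap\Upsilon^{(1)}M$, this is the homotopy sought. I expect the fussiest parts to be exactly the two uniformity claims --- the $z$-independent speed bound (so that a single large $k_1$ works for the whole cycle) and the check that the deformation never leaves $U_\tau$ --- rather than any conceptual obstacle; both come down to compactness of $\Sigma$ and to fixing, in this order, $\delta_1$ small, then $k_1$ large, then $\tau$ small.
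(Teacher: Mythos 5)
Your argument is correct and follows essentially the same route as the paper's: the same two-stage homotopy (Morse shortening on each $[\theta_i,\theta_{i+1}]$ for $s\in[0,\tfrac12]$, then a linear reparametrization of the breakpoint times toward the $\Upsilon^{(1)}M$-normalization for $s\in[\tfrac12,1]$), with the same breakpoints seeded at $T(z)$ via Lemma~\ref{l:T}. You make explicit several details the paper leaves to the reader --- the $z$-uniform speed bound guiding the choice of $k_1$, the strict convexity of $t_1\mapsto \delta_1^2/t_1 + A(z)/(1-t_1)$ with minimum exactly at the $\Upsilon^{(1)}M$ value of $\tau_1$ (which drives the energy monotonicity of the second stage), and the verification that the deformation never leaves $U_\tau$ --- all of which are sound.
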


\begin{proof}
We fix a small enough $\delta_1\in(0,\rho_{\min})$ so that Lemma~\ref{l:T} holds with an associated function $T:\Sigma\to(0,1)$. We also fix $\tau\in(0,\min T)$, and a large enough $k_1\in\N$ so that, if we set \[\Upsilon^{(1)}M:=\Upsilon^{(1)}_{\delta_1,k_1}M,\] we have
\begin{align*}
\max E_1\circ\sigma < \sup_{\Upsilon^{(1)}M} E_1,
\end{align*}
see~\eqref{e:max_E_delta_k}. We fix $z\in\Sigma$ and $\gamma_0:=\sigma(z)$, and define its deformation $\gamma_s=\sigma_s(z)\in U_\tau$, for $s\in[0,1]$, as follows. We set
\begin{align*}
t_i=t_i(z):=T(z) + \frac{1-T(z)}{k-1}(i-1),
\qquad
i=1,...,k,
\end{align*}
so that $0=:t_0<t_1<...<t_k=1$. The first half of the deformation, for $s\in[0,1/2]$, is the usual Morse shortening process: we set $r_{i,s}:=(1-2s)t_i+2st_{i+1}$ for $i=0,...,k-1$; we define $\gamma_s|_{[t_i,r_{i,s}]}$ the be the shortest $g_1$-geodesic such that $\gamma_s(t_i)=\gamma_0(t_i)$ and $\gamma_s(r_{i,s})=\gamma_0(r_{i,s})$, and we set $\gamma_s|_{[r_{i,s},t_{i+1}]}=\gamma_0|_{[r_{i,s},t_{i+1}]}$. 

The curve $\gamma_{1/2}$ is a broken geodesic whose first portion $\gamma_{1/2}|_{[t_0,t_1]}$ has $g_1$-length $\delta_1$. The second half of the deformation, for $s\in[1/2,1]$, is just a time reparametrization of $\gamma_{1/2}$ that will make it belong to $\Upsilon^{(1)}M$. We set 
\[
\qq=(q_0,...,q_{k_1-1}):=(\gamma_{1/2}(t_0),\gamma_{1/2}(t_1),...,\gamma_{1/2}(t_{k-1})).
\]
We will denote by  
$\tau_i:=\tau_i(\sigma(z))$
the times values \eqref{e:tau_0}, \eqref{e:tau_1}, and \eqref{e:tau_i} associated to $g_1$. For each $s\in[1/2,1]$, $i=0,...,k-1$, and $r\in[0,1]$,  we set
\begin{align*}
\gamma_s\big( (2s-1)((1-r)\tau_i + r\tau_{i+1}) + (2-2s)((1-r)t_i + r t_{i+1}) \big)\qquad &\\
:=
\gamma_{1/2}\big((1-r)t_i + r t_{i+1} \big).&
\qedhere
\end{align*}
\end{proof}

For each $s\in[0,1]$ and $t\in(0,\tau]$, we have the evaluation map
\begin{align*}
\Ev_{s,t}:U_\tau\to \Tan M\setminus \zerosection,
\qquad
\Ev_{s,t}(\gamma)=(\exp_{\gamma(0)}^{(s)})^{-1}(\gamma(t)).
\end{align*}
Since $\Ev_{s,t}$ depends continuously on the pair $(s,t)$, the cohomology homomorphism $\Ev_{s,t}^*:H^{2n-1}(\Tan M\setminus \zerosection)\to H^{2n-1}(U_\tau)$ is actually independent of $(s,t)$. We denote a generator of its image by
\[
\Omega_\tau
\in
\Ev_{s,t}^*(H^{2n-1}(\Tan M\setminus \zerosection))
\subset
H^{2n-1}(U_\tau).
\]

\begin{lem}
\label{l:Omega}
Up to changing the sign of $\omega_0$ and $\omega_1$, we have 
\[\Omega_\tau|_{U_\tau\cap\Upsilon^{(s)}M}=\omega_s|_{U_\tau\cap\Upsilon^{(s)}M},
\qquad\forall s\in\{0,1\}.\]
\end{lem}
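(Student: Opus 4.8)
The plan is to show that, once restricted to $U_\tau\cap\Upsilon^{(s)}M$, each of $\Omega_\tau$ and $\omega_s$ is the pullback of a generator of $H^{2n-1}$ of the punctured tangent bundle along one of two \emph{homotopic} evaluation maps; the two pullbacks can then differ only by a sign, which is exactly what the sign changes of $\omega_0$ and $\omega_1$ permitted in the statement absorb. I would run the argument separately for $s=0$ and $s=1$, so that the two signs are chosen independently. Fixing $s\in\{0,1\}$, I would write $SM_s$ for the unit tangent bundle of $(M,\delta_s^{-2}g_s)$, realized as the $g_s$-sphere bundle of radius $\delta_s$ inside $\Tan M\setminus\zerosection$, and let $\iota_s\colon SM_s\hookrightarrow\Tan M\setminus\zerosection$ be the inclusion. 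Since radial rescaling deformation retracts $\Tan M\setminus\zerosection$ onto $SM_s$, the homomorphism $\iota_s^*$ is an isomorphism $H^{2n-1}(\Tan M\setminus\zerosection)\to H^{2n-1}(SM_s)\cong\Z$ taking generators to generators. I would then pick a generator $\theta\in H^{2n-1}(\Tan M\setminus\zerosection)$ and a time $t\in(0,\tau]$ with $\Omega_\tau=\Ev_{s,t}^*\theta$ (possible since $\Omega_\tau$ is a generator of the image $\Ev_{s,t}^*\big(H^{2n-1}(\Tan M\setminus\zerosection)\big)$, which does not depend on $(s,t)$), and recall that $\omega_s=\Ev^*\eta_s$ for a chosen generator $\eta_s$ of $H^{2n-1}(SM_s)$, where $\Ev\colon\Upsilon^{(s)}M\to SM_s$ is given by $\Ev(\qq)=(\exp^{(s)}_{q_0})^{-1}(q_1)$.

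The core step would be to build a homotopy, through maps $U_\tau\cap\Upsilon^{(s)}M\to\Tan M\setminus\zerosection$, from $\Ev_{s,t}|_{U_\tau\cap\Upsilon^{(s)}M}$ to $\iota_s\circ\Ev|_{U_\tau\cap\Upsilon^{(s)}M}$. My candidate is
\[
H(\qq,\lambda):=\big(\exp^{(s)}_{q_0}\big)^{-1}\!\big(\gamma_{\qq}(t_\lambda(\qq))\big),
\qquad
t_\lambda(\qq):=(1-\lambda)\,t+\lambda\,\tau_1(\qq),
\]
with $\tau_1\colon\Upsilon^{(s)}M\to(0,1)$ the smooth function~\eqref{e:tau_1} associated to $g_s$. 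At $\lambda=0$ this is $\Ev_{s,t}$, and at $\lambda=1$ it is $(\exp^{(s)}_{q_0})^{-1}(\gamma_{\qq}(\tau_1(\qq)))=(\exp^{(s)}_{q_0})^{-1}(q_1)$, i.e.\ $\iota_s\circ\Ev$. Granting that $H$ is a genuine homotopy, I would conclude in cohomology that
\[
\Omega_\tau\big|_{U_\tau\cap\Upsilon^{(s)}M}
=\big(\Ev_{s,t}|_{U_\tau\cap\Upsilon^{(s)}M}\big)^*\theta
=\big(\Ev|_{U_\tau\cap\Upsilon^{(s)}M}\big)^*\big(\iota_s^*\theta\big),
\]
and since $\iota_s^*\theta=\pm\eta_s$, replacing $\omega_s$ by $-\omega_s$ if necessary turns the right-hand side into $\omega_s|_{U_\tau\cap\Upsilon^{(s)}M}$. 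Doing this for $s=0$ and $s=1$ would finish the proof.

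The main obstacle is the verification that $H$ is well defined and continuous, i.e.\ that $\gamma_{\qq}(t_\lambda(\qq))$ never leaves the open set $B_{g_s}(q_0,\rho_s)\setminus\{q_0\}$ on which $(\exp^{(s)}_{q_0})^{-1}$ is a diffeomorphism into $\Tan M\setminus\zerosection$. This calls for a short case distinction. If $t\le\tau_1(\qq)$, then $t_\lambda(\qq)\in[t,\tau_1(\qq)]$, so $\gamma_{\qq}(t_\lambda(\qq))$ lies on the minimizing $g_s$-geodesic from $q_0$ to $q_1$, which has $g_s$-length $\delta_s<\rho_{\min}\le\rho_s$, and is $\ne q_0$ since $t_\lambda(\qq)>0$. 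If $t>\tau_1(\qq)$, then $t_\lambda(\qq)\in[\tau_1(\qq),t]\subseteq(0,\tau]$, and the two conditions entering the definition of $U_\tau$, together with $d_s\le d_{\max}$ and $\rho_{\min}\le\rho_s$, give precisely $\gamma_{\qq}(t_\lambda(\qq))\in B_{g_s}(q_0,\rho_s)\setminus\{q_0\}$; this is exactly what the uniform constants $d_{\max},\rho_{\min}$ and the parameter $\tau$ in the definition of $U_\tau$ are tailored for. Continuity of $H$ is then routine, from continuity of $\qq\mapsto\gamma_{\qq}$ (the identification $\Upsilon^{(s)}M\subset\Lambda M$), of the smooth function $\tau_1$, and of the local inverse exponential map.
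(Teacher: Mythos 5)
Your proof is correct and follows essentially the same approach as the paper's: both hinge on the identity $\Ev(\gamma_{\qq})=\Ev_{s,\tau_1(\qq)}(\gamma_{\qq})$ on $U_\tau\cap\Upsilon^{(s)}M$ and on the observation that $\Ev_{s,t}$ and $\Ev$ (viewed with codomain $\Tan M\setminus\zerosection$) are homotopic. The paper states this homotopy as something one ``readily sees,'' whereas you construct it explicitly via the linear interpolation $t_\lambda(\qq)=(1-\lambda)t+\lambda\tau_1(\qq)$ and verify well-definedness by the case distinction $t\le\tau_1(\qq)$ (use that $\delta_s<\rho_{\min}\le\rho_s$) versus $t>\tau_1(\qq)$ (use the defining conditions of $U_\tau$); this usefully fills in the detail the paper leaves implicit.
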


\begin{proof}
We fix $s\in\{0,1\}$, and denote by  
$\tau_1:\Upsilon^{(s)} M\to(0,1)$
the function \eqref{e:tau_1} and by $\Ev:\Upsilon^{(s)}M\to \Tan M\setminus\zerosection$ the evaluation map~\eqref{e:evaluation} associated to $g_s$. Notice that the codomain of $\Ev$ in~\eqref{e:evaluation} is the unit tangent bundle $SM$ of $(M,\delta_s^{-2}g_s)$. However, since the the inclusion $SM\hookrightarrow\Tan M\setminus\zerosection$ is a homotopy equivalence, the cohomology class $\omega_s$ will also be the generator of $\Ev^*(H^*(\Tan M\setminus\zerosection))$. Since
\begin{align*}
\Ev(\gamma_{\qq})=\Ev_{s,\tau_1(\qq)}(\gamma_{\qq}),
\qquad\forall\gamma_{\qq}\in U_\tau\cap\Upsilon^{(s)}M,
\end{align*}
we readily see that $\Ev|_{U_\tau\cap\Upsilon^{(s)}M}$ and $\Ev_{s,t}|_{U_\tau\cap\Upsilon^{(s)}M}$, for all $t\in(0,\tau]$, are homotopic maps. Therefore, both the restrictions $\Omega_\tau|_{U_\tau\cap\Upsilon^{(s)}M}$ and $\omega_s|_{U_\tau\cap\Upsilon^{(s)}M}$ are generators of the image $\Ev|_{U_\tau\cap\Upsilon^{(s)}M}^*(\Tan M\setminus\zerosection)$, and, up to changing the sign of $\omega_s$, they coincide.
\end{proof}

Since $h$ and $h\frown\omega_0$ are mapped to non-zero classes in $H_*(\Lambda M,\{E_0<\epsilon_0\})$ by the homomorphism~\eqref{e:inclusion_last}, Lemmas~\ref{l:sigma_s} and~\ref{l:Omega} imply that, for each $s\in\{0,1\}$, $[\sigma_s]$ and $[\sigma_s]\cap\omega_s$ are non-trivial relative homology classes in 
\[
H_*(U_\tau\cap\Upsilon^{(s)}M,U_\tau\cap\Upsilon^{(s)}M^{<\epsilon_s}).
\] 
If we consider the homomorphism induced by the inclusion
\begin{align*}
\iota^{(s)}_*:H_*(U_\tau\cap\Upsilon^{(s)}M,U_\tau\cap\Upsilon^{(s)}M^{<\epsilon_s})
\ttoup^{\incl_*}
H_*(U_\tau,U_\tau\cap\{E_1<\epsilon_1\}),
\end{align*}
we have
\begin{align*}
\iota^{(0)}_*[\sigma_0]=\iota^{(1)}_*[\sigma_1]=[\sigma],
\qquad
\iota^{(0)}_*([\sigma_0]\frown\omega_0)=\iota^{(1)}_*([\sigma_1]\frown\omega_1)=[\sigma]\frown\Omega_\tau.
\end{align*}
In particular, if we see $\sigma_s$ and $\sigma_s\frown\omega_s$ as relative cycles in $(\Lambda M,\{E_1<\epsilon_1\})$, we have $[\sigma_0]=[\sigma_1]$ and $[\sigma_0\frown\omega_0]=[\sigma_1\frown\omega_1]$ in $H_*(\Lambda M,\{E_1<\epsilon_1\})$.

Now, consider the generator
$\alpha\in H^{i(M)}(\Lambda M,\{E_1<\epsilon_1\})\cong\Z$, and the inclusion 
\[j_1:(\Upsilon^{(1)} M,\Upsilon^{(1)} M^{<\epsilon_1})\hookrightarrow(\Lambda M,\{E_1<\epsilon_1\}).\]

\begin{lem}
\label{l:cohomology}
For each $\delta_1\in(0,\rho_1)$ small enough and $k_1\in\N$ large enough, if we set 
\[\Upsilon^{(1)}M:=\Upsilon_{\delta_1,k_1}^{(1)}M,\]
we have $\omega_1\smile j_1^*\alpha\neq0$ in $H^{i(M)}(\Upsilon^{(1)} M,\Upsilon^{(1)} M^{<\epsilon_1})$.
\end{lem}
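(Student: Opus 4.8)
The plan is to prove the dual statement: that the Kronecker pairing of $\omega_1\smile j_1^*\alpha$ with a suitable relative homology class is nonzero. The natural test class is the one carried by the cycle $\sigma_1$ produced in Lemma~\ref{l:sigma_s}.

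First I would note that, since $\{E_0<\epsilon_0\}\subseteq\{E_1<\epsilon_1\}$ and $s\mapsto E_1(\sigma_s(z))$ is non-increasing by Lemma~\ref{l:sigma_s}, the map $\sigma_1$ is a relative cycle
\[
\sigma_1:(\Sigma,\partial\Sigma)\longrightarrow\big(U_\tau\cap\Upsilon^{(1)}M,\ U_\tau\cap\Upsilon^{(1)} M^{<\epsilon_1}\big),
\]
and pushing it forward along the inclusion produces a class $[\sigma_1]\in H_{i(M)+2n-1}(\Upsilon^{(1)}M,\Upsilon^{(1)} M^{<\epsilon_1})$. By the cup--cap adjunction and the naturality of the cap product under inclusions,
\begin{align*}
\big(\omega_1\smile j_1^*\alpha\big)[\sigma_1]
&=
\big(j_1^*\alpha\big)\big([\sigma_1]\frown\omega_1\big)\\
&=
\alpha\big((j_1)_*([\sigma_1]\frown\omega_1)\big)\in\Z.
\end{align*}

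Next I would identify $(j_1)_*([\sigma_1]\frown\omega_1)$ in $H_{i(M)}(\Lambda M,\{E_1<\epsilon_1\})$, which is exactly what the discussion following Lemma~\ref{l:Omega} supplies: replacing $\omega_1$ by $\Omega_\tau$ on $U_\tau\cap\Upsilon^{(1)}M$ via Lemma~\ref{l:Omega}, and using the homotopy $\sigma_s$ of Lemma~\ref{l:sigma_s}, one obtains $(j_1)_*([\sigma_1]\frown\omega_1)=[\sigma_0\frown\omega_0]$ in $H_*(\Lambda M,\{E_1<\epsilon_1\})$, and under the inclusion isomorphism $H_*(\Lambda M,\{E_0<\epsilon_0\})\cong H_*(\Lambda M,\{E_1<\epsilon_1\})$ this is the image of $h\frown\omega_0$ under the homomorphism~\eqref{e:inclusion_last}. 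Moreover, by the construction carried out in the proof of Lemma~\ref{l:Zoll}, $h\frown\omega_0$ is the image of a generator of an infinite cyclic group under an injective homomorphism, so $(j_1)_*([\sigma_1]\frown\omega_1)$ has infinite order in $H_{i(M)}(\Lambda M,\{E_1<\epsilon_1\})$. Finally, arguing exactly as at the end of the proof of Lemma~\ref{l:Zoll_minimax}: by \eqref{e:cohomology_loop_space} we have $H^{i(M)}(\Lambda M,\{E_1<\epsilon_1\})\cong H^{i(M)}(\Lambda M,M)\cong\Z$ and $H^{i(M)-1}(\Lambda M,M)=0$, so the universal coefficient theorem identifies $H^{i(M)}(\Lambda M,\{E_1<\epsilon_1\})$ with $\mathrm{Hom}\big(H_{i(M)}(\Lambda M,\{E_1<\epsilon_1\}),\Z\big)$, and the generator $\alpha$ is, up to sign, the projection onto the free summand; in particular it pairs non-trivially with the infinite-order class $(j_1)_*([\sigma_1]\frown\omega_1)$. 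Combining the displays gives $(\omega_1\smile j_1^*\alpha)[\sigma_1]\neq0$, hence $\omega_1\smile j_1^*\alpha\neq0$.

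The step that requires genuine care, rather than routine naturality, is the reduction in the third paragraph: one must track cap products and inclusion-induced homomorphisms coherently across $U_\tau\cap\Upsilon^{(1)}M$, $\Upsilon^{(1)}M$, $U_\tau$ and $\Lambda M$ (most of which is already packaged in the paragraph after Lemma~\ref{l:Omega}) and, crucially, upgrade the bare non-vanishing of $h\frown\omega_0$ furnished by Lemma~\ref{l:Zoll} to non-vanishing of infinite order, since this is what guarantees that the generator $\alpha$, viewed as a homomorphism to $\Z$, actually detects the class.
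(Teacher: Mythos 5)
Your proof follows the paper's argument exactly: pair $\omega_1\smile j_1^*\alpha$ against $[\sigma_1]$, use cup--cap adjunction together with the identifications established in the paragraph after Lemma~\ref{l:Omega} to reduce to $\alpha\big((j_0)_*([\sigma_0]\frown\omega_0)\big)\neq0$, and invoke the universal coefficient theorem. Your explicit verification that the class $(j_1)_*([\sigma_1]\frown\omega_1)$ has infinite order (rather than merely being nonzero) is a valid and welcome refinement, since a generator of $\mathrm{Hom}(A,\Z)$ need not detect torsion; the paper leaves this implicit, relying tacitly on $H_{i(M)}(\Lambda M,\{E_1<\epsilon_1\})\cong\Z$.
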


\begin{proof}
Let $\delta_1\in(0,\rho_1)$ be small enough and $k_1\in\N$ large enough so that Lemma~\ref{l:sigma_s} holds. Since $H^{i(M)-1}(\Lambda M,\{E_1<\epsilon_1\})$ is trivial, the universal coefficient theorem implies that
\begin{align*}
H^{i(M)}(\Lambda M,\{E_1<\epsilon_1\})\cong\mathrm{Hom}\big( H_{i(M)}(\Lambda M,\{E_1<\epsilon_1\}),\Z \big).
\end{align*}
This, together with the facts that $\alpha$ is the generator of $H^{i(M)}(\Lambda M,\{E_1<\epsilon_1\})$ and that $(j_0)_*([\sigma_0]\frown\omega_0)$ is non-zero in $H_{i(M)}(\Lambda M,\{E_1<\epsilon_1\})$, implies that
\begin{align*}
\alpha((j_0)_*([\sigma_0]\frown\omega_0))
\neq0.
\end{align*}
Therefore, we conclude
\begin{align*}
(\omega_1\smile j_1^*\alpha)[\sigma_1]
&=
(j_1^*\alpha)([\sigma_1]\frown\omega_1)
=
\alpha((j_1)_*([\sigma_1]\frown\omega_1))\\
&=
\alpha((j_0)_*([\sigma_0]\frown\omega_0))
\neq0.
\qedhere
\end{align*}
\end{proof}

\begin{proof}[Proof of Theorem~\ref{t:main}]
Let $M$ be a closed manifold of dimension $n\geq2$ admitting a simple Zoll Riemannian metric, and $g$ a Riemannian metric on $M$.
Lemma~\ref{l:cohomology} implies that $\omega\smile j^*\alpha\neq0$ in the relative homology group $H^{i(M)}(\Upsilon M,\Upsilon M^{<4\rho^2})$. 

If $g$ is a Zoll Riemannian metric whose unit-speed geodesics have minimal period $\ell$, then Lemma~\ref{l:Zoll_minimax} implies that $c_g(j^*\alpha)=c_g(\omega\smile j^*\alpha)=\ell^2$. Conversely, assume that $c_g(j^*\alpha)=c_g(\omega\smile j^*\alpha)=:\ell^2$.
We can apply Lemma~\ref{l:main_lemma} with $d=i(M)$ and $\mu=j^*\alpha$, and infer that $g$ is a Besse Riemannian metric, and either $\ell$ or $\ell-2\delta$ is a common multiple of the periods of the unit-speed geodesics of $(M,g)$. Moreover, the critical set 
\[K:=\crit(E)\cap\big(E^{-1}(\ell^2)\cup E^{-1}((\ell-\delta)^2)\big)\cong SM,\]
has Morse index $\ind(E,K)\leq i(M)$. Since $i(M)$ is the minimal Morse index of a closed geodesic, we have
\begin{align}
\label{e:ind_K}
\ind(E,K) = i(M).
\end{align}
Now, let us further require $M$ to be simply connected and spin, and assume by contradiction that $g$ is not a Zoll Riemannian metric. We are now going to employ two results due to Radeschi and Wilking. Since $(M,g)$ is a simply connected Besse manifold, by \cite[Theorem~D]{Radeschi:2017dz} the energy functional $E:\Lambda M\to[0,\infty)$ is perfect for the $S^1$-equivariant singular cohomology with rational coefficients $H^*_{S^1}(\cdot\,;\Q)$. Moreover, since $(M,g)$ is an orientable and spin Besse manifold, by \cite[Corollary~C]{Radeschi:2017dz} the negative bundles of all the critical manifolds of $E$ are orientable. This, in turn, implies that all critical manifolds of $E$ are homologically visible, and, if we set 
\[
K_{i(M)}
:=
\big\{ \gamma\in\crit(E)\ \big|\ \ind(E)=i(M) \big\},
\]
we have
\begin{align*}
H^{i(M)}_{S^1}(\Lambda M,M;\Q)\cong H^{0}_{S^1}(K_{i(M)};\Q).
\end{align*}
Namely, the rank of $H^{i(M)}_{S^1}(\Lambda M,M;\Q)$ is  the number of path-connected components of $K_{i(M)}$.
Clearly, $K$ is a path-connected component of $K_{i(M)}$. Since $g$ is Besse but not Zoll, Equation~\eqref{e:ind_K} implies that $K_{i(M)}\setminus K$ is not empty, and therefore 
\begin{align}
\label{e:rank_geq_2}
\rank\big(H^{i(M)}_{S^1}(\Lambda M,M;\Q)\big)\geq2. 
\end{align}
On the other hand, if we repeat the whole argument with a Zoll Riemannian metric $g_0$ instead of $g$, the critical set $K_{i(M)}$ becomes diffeomorphic to the unit tangent bundle $SM$, which is path-connected. This implies that 
\[\rank\big(H^{i(M)}_{S^1}(\Lambda M,M;\Q)\big)=1,\] and contradicts~\eqref{e:rank_geq_2}.
\end{proof}

\bibliography{_biblio}
\bibliographystyle{amsalpha}

\end{document}